\newcommand{\campoA}{\textbf{A}}
\newcommand{\campoX}{\textbf{X}}
\newcommand{\campoY}{\textbf{Y}}
\newcommand{\campov}{\textbf{v}}
\newcommand{\intprod}{\mathbin{\raisebox{\depth}{\scalebox{1}[-1]{$\lnot$}}}}
\numberwithin{equation}{section}
\newtheorem{Theorem}{Theorem}[section]
\newtheorem{Lemma}[Theorem]{Lemma}
\newtheorem{Proposition}[Theorem]{Proposition}
 { \theoremstyle{definition}
\newtheorem{Definition}[Theorem]{Definition}
\newtheorem{Remark}[Theorem]{Remark} }
\begin{document}

%\allowdisplaybreaks

\newcommand{\arXivNumber}{1511.03025}

\renewcommand{\PaperNumber}{077}

\FirstPageHeading

\ShortArticleName{Solvable Structures Associated to the Nonsolvable Symmetry Algebra $\mathfrak{sl}(2,\mathbb{R})$}

\ArticleName{Solvable Structures Associated\\ to the Nonsolvable Symmetry Algebra $\boldsymbol{\mathfrak{sl}(2,\mathbb{R})}$}

\Author{Adri\'an RUIZ and Concepci\'on MURIEL}

\AuthorNameForHeading{A.~Ruiz and C.~Muriel}

\Address{Department of Mathematics, University of C\'adiz, 11510 Puerto Real, Spain}
\Email{\href{mailto:adriruse@hotmail.com}{adriruse@hotmail.com}, \href{mailto:concepcion.muriel@uca.es}{concepcion.muriel@uca.es}}

\ArticleDates{Received November 11, 2015, in f\/inal form August 03, 2016; Published online August 08, 2016}

\Abstract{Third-order ordinary dif\/ferential equations with Lie symmetry algebras isomorphic to the nonsolvable algebra $\mathfrak{sl}(2,\mathbb{R})$ admit solvable structures. These solvable structures can be constructed by using the basis elements of these algebras. Once the solvable structures are known, the given equation can be integrated by quadratures as in the case of solvable symmetry algebras.}

\Keywords{f\/irst integral; solvable structure; $\mathcal{C}^{\infty}$-symmetry; nonsolvable algebra}

\Classification{34A05; 34A26; 34C14} %; 76M60}

\vspace{-2.5mm}

\section{Introduction}

It is well known that an $n$th-order ordinary dif\/ferential equation (ODE) possessing an $n$-di\-men\-sional solvable Lie symmetry algebra can be integrated by quadratures \cite{hydon2000symmetry,ibragimov2004practical,olver2000applications, olverequivalence, hans1989differential}. This is a suf\/f\/icient condition, but not necessary \cite{ artemioecu} and extensions of the classical approach have been amply investigated in the recent literature (hidden symmetries \cite{shrauner95,hidden1993}, nonlocal symmetries \cite{adam2002,govinder95,govinder97}, $\mathcal{C}^{\infty}$-symmetries \cite{muriel01ima1}, etc.).

In this paper we focus on a generalization of solvable algebras called solvable structures \cite{barko,basarab,athorne,prince} and their applications to integrate ODEs which admit nonsolvable symmetry algebras. The concept of solvable structure \cite{basarab} refers to systems of independent vector f\/ields that are in involution; for a scalar ODE, this (trivially involutive) system is formed by just one element, the vector f\/ield $\campoA$ associated to the given ODE. In this case, a solvable structure involves an ordered set of generalized vector f\/ields that, in general, are not symmetries of the ODE and do not form a solvable algebra. Nevertheless, if a solvable structure for the ODE is known, then the equation can be (locally) solved by quadratures \cite{basarab,athorne}. Thus, it is important to have methods that allow the determination of solvable structures for ODEs in order to solve them by quadratures.

We investigate in this paper if a solvable structure can be constructed from a nonsolvable Lie symmetry algebra. In this case, the integrability by quadratures of the ODEs admitting nonsolvable symmetry algebras would be warranted, as in the case of solvable symmetry algebras. This study addresses the case of third-order ODEs which admit a Lie symmetry algebra isomorphic to the nonsolvable algebra $\mathfrak{sl}(2,\mathbb{R})$. This class of equations is well determined and known in the literature: in fact, symmetry analysis of third-order ODEs and the classif\/ication according to their symmetries have been extensively studied \cite{gatomri,Gusyatnikova,mahomed,schmucker1998symmetry}.

Previous studies in the literature \cite{olverclarkson, ibragimovnucci} show that any third-order ODE admitting ${\rm SL}(2,\mathbb{C})$ as symmetry group can be solved via a pair of quadratures and the solution to a~Riccati equation, or, equivalently, a second-order linear ODE. The analysis performed in \cite{olverclarkson} proves this result by using that the three inequivalent actions of the unimodular Lie group ${\rm SL}(2,\mathbb{C})$ on the complex plane \cite{lie} are directly connected via the standard prolongation process. A~dif\/ferent approach is presented in \cite{ibragimovnucci}: a two-dimensional subalgebra $\mathcal{L}_2$ is used to reduce the given third-order ODE to a~f\/irst-order equation, which cannot be integrated by quadratures, but can be transformed into a~Riccati equation by using a nonlocal symmetry. In~\cite{algebras_no_resolubles} it is shown that this nonlocal symmetry comes from one of the symmetries of the original third-order ODE and can be recovered as a~$\mathcal{C}^{\infty}$-symmetry for the f\/irst-order equation. The general solution of the Riccati equation becomes a~second-order ODE in the original variables, which can be integrated by quadratures by using~$\mathcal{L}_2$. Both procedures require a previous transformation to map the basis elements of the Lie symmetry algebra into one of the canonical realizations of $\mathfrak{sl}(2,\mathbb{C})$ \cite{accion,lie}.

The starting point of the approach presented in this paper is a basis $\left\{ \campov_1, \campov_2, \campov_3 \right\}$ of the Lie invariance algebra for the given third-order ODE satisfying the following commutation relations:
\begin{gather*}%\label{co0}
[\campov_1,\campov_3]=\campov_1,\qquad [\campov_1,\campov_2]=2 \campov_3,\qquad [\campov_3,\campov_2]=\campov_2.
\end{gather*}
Although this symmetry Lie algebra is isomorphic to the nonsolvable symmetry algebra $\mathfrak{sl}(2,\mathbb{R})$,
it is shown that solvable structures can be explicitly constructed from the second-order prolongations of the basis elements (denoted by $\campov_i^{(2)}$ for $i=1,2,3$). In fact, the main result in this paper shows that there exist two functions $F_1$ and $F_2$ such that the vector f\/ields $F_1\campov_1^{(2)}$, $F_2\campov_2^{(2)}$, and $\campov_3^{(2)}$ can be used to construct two solvable structures with respect to the vector f\/ield $\campoA$ associated to the given ODE (Theorem~\ref{teorema_final}). As a conclusion, the original ODE can be solved (at least locally) by quadratures, provided the functions~$F_1$ and~$F_2$ are known.

This result is based on the compatibility of two systems of partial dif\/ferential equations (PDEs) (see systems~(\ref{tres})). We present a constructive proof of the existence of solutions for systems (\ref{tres}), which provides a~method to construct the solvable structure in practice. For this purpose, instead of reducing the original ODE to a f\/irst-order ODE in the usual way (by using the solvable subalgebra generated by $\campov_1$ and $\campov_2$, as in~\cite{olverclarkson,ibragimovnucci}), we use $\mathbf{v}_3$ to reduce the original equation to a second-order ODE. The basis elements $\campov_1$ and $\campov_2$ are lost as Lie point symmetries for the reduced equation, but can be recovered as $\mathcal{C}^{\infty}$-symmetries~\cite{algebras_no_resolubles}. These $\mathcal{C}^{\infty}$-symmetries can be used to calculate two functionally independent f\/irst integrals $I_1$ and $I_2$ of the reduced equation. A~procedure to compute them by quadratures~\cite{primer_articulo} is explained in Section \ref{section2}, provided that particular solutions of systems~(\ref{hs}) are known. Particular solutions $F_1$ and $F_2$ of sys\-tems~(\ref{tres}) can be directly found by using $I_1$ and $I_2$ (see~(\ref{F})) or by using solutions of systems~(\ref{hs}) (see~(\ref{Fconhbarra})).

In Section~\ref{section4} the solvable structures are used to give three dif\/ferent strategies that can be followed to integrate by quadratures any third-order ODE admitting a Lie symmetry algebra that is isomorphic to $\mathfrak{sl}(2,\mathbb{R})$. Remarkably, no further integration is necessary when the mentioned functions $F_1$ and $F_2$ come from f\/irst integrals~$I_1$ and~$I_2$ of the reduced equation or from particular solutions of systems~(\ref{hs}).

The method is illustrated in Sections~\ref{section6} and~\ref{section7} with two examples of third-order ODEs which admit the nonsolvable Lie algebra $\mathfrak{sl}(2,\mathbb{R})$ as symmetry algebra. For each example, the procedure provides a complete set of f\/irst integrals for the third-order equation in terms of two independent solutions of a second-order linear equation. As in the previous methods in the literature, the complete solution of the original ODE involves the solutions of a second-order linear ODE, but the use of the procedure we present in this paper (see Method~2 in Section~\ref{section4}) does not require additional quadratures.

\section{Symmetries and solvable structures}\label{preliminares}

In this section we recall the notion of solvable structure and some of its properties~\cite{basarab, prince}. Functions, vector f\/ields, and dif\/ferential forms are assumed to be smooth and well def\/ined on an open and simply connected subset~$D$ of either~$\mathbb{R}^n$ or an $n$-dimensional manifold $\mathcal{M}_n$.

Given a system $\mathcal{S}$ of vector f\/ields on $D$, $\operatorname{span}(\mathcal{S})$ stands for the space of the linear combinations of the elements of $\mathcal{S}$ over the ring of the smooth functions on $D$. In what follows, two systems of vector f\/ields, $\mathcal{S}$ and $\mathcal{\widetilde{S}}$, are called \textit{equivalent} if $\operatorname{span}(\mathcal{S})= \operatorname{span}(\mathcal{\widetilde{S}})$ \cite{athorne}.

The following concepts refer to an involutive system of vector f\/ields, i.e., a set of vector f\/ields $\mathcal{A}=\{ \campoA_1,\dots, \campoA_{r} \}$ such that $[\campoA_i,\campoA_j]\in \operatorname{span}(\mathcal{A})$ for $i,j \in \{1,\dots,r\}$. In this paper angle brackets are used to indicate that the order of the elements of the set must be taken into account.

\begin{Definition}\label{simetria}
 {\rm Let $\mathcal{A}= \{ \campoA_1,\dots, \campoA_{r} \}$ be a system of $r<n$ independent vector f\/ields on $D$ which are in involution.
 \begin{enumerate}\itemsep=0pt
 \item A smooth vector f\/ield $\campoX$ on $D$ is called a \textit{symmetry} of $\mathcal{A}$ if the following conditions hold:
 \begin{enumerate}\itemsep=0pt
 \item $\campoA_1,\dots, \campoA_{r}$, and $\campoX$ are independent;
 \item $[\campoX,\campoA_i]\in \operatorname{span}(\mathcal{A})$ for $1\leq i\leq r$.
 \end{enumerate}
 \item Let $\mathcal{S}= \langle \campoX_1,\dots,\campoX_{n-r} \rangle$ be an ordered set of independent vector f\/ields on~$D$. The ordered system $\mathcal{A} \cup \mathcal{S} = \langle \campoA_1,\dots, \campoA_{r}, \campoX_1,\dots,\campoX_{n-r} \rangle$ is a \textit{solvable structure} with respect to~$\mathcal{A}$ if
 \begin{enumerate}\itemsep=0pt
 \item $\mathcal{S}_j= \{ \campoA_1,\dots, \campoA_{r}, \campoX_1,\dots,\campoX_j\}$ is in involution for $j=1,\dots,n-r$;
 \item $\campoX_1$ is a symmetry of $\mathcal{A}$;
 \item $\campoX_{j+1}$ is a symmetry of $\mathcal{S}_j$ for $j=1,\dots,n-r-1$.
 \end{enumerate}
 \end{enumerate}}
\end{Definition}

\subsection{Symmetries and solvable structures in the context of ODEs}

The notion of symmetry given in Def\/inition \ref{simetria} represents a generalization of the concept of Lie point symmetry. Given a scalar $n$th-order ODE
\begin{gather}\label{ode}
u_n = \phi(x,u,u_1,\dots,u_{n-1}),
\end{gather} where $x$ denotes the independent variable, $u$ is the dependent variable, and $u_j=\frac{d^j u}{dx^j}$ for $1\leq j\leq n$, let \begin{gather}\label{campoA}
\campoA = \partial_x + u_1 \partial_u + \dots+ \phi (x,u,u_1,\dots,u_{n-1})\partial_{u_{n-1}}
\end{gather} denote \looseness=-1 the vector f\/ield associated to equation~(\ref{ode}). Equation (\ref{ode}) is def\/ined for points of the corresponding $n$th-order jet space whose projections to the $(n-1)$th-order jet belong to the domain of~$\phi$. Let $M\subset \mathbb{R}^2$ be an open set of the projection of this domain to the zero-order jet space.

A smooth vector f\/ield $\campov = \xi(x,u) \partial_x + \eta(x,u) \partial_u$ on $M$ is a Lie point symmetry of equation~(\ref{ode}) if and only if
\begin{gather*}\big[\campov^{(n-1)},\campoA\big]=-\campoA(\xi)\cdot \campoA,\end{gather*}
where $\campov^{(n-1)}$ stands for the $(n-1)$th-order {prolongation of} $\campov$ \cite{ibragimov2004practical, olver2000applications, olverequivalence, hans1989differential}. Therefore, $\campov^{(n-1)}$ is a symmetry of the (trivially involutive) system $\mathcal{A}=\{\campoA\}$ in the sense of Def\/inition \ref{simetria}. The same result holds for generalized symmetries for which the inf\/initesimals $\xi$ and $\eta$ can depend on derivatives of $u$ with respect to $x$ \cite{olver2000applications, olverequivalence}.

The notion of solvable structure given in Def\/inition \ref{simetria} generalizes the concept of solvable symmetry algebra. If equation (\ref{ode}) admits a solvable symmetry algebra $\mathcal{G}$ of dimension $n$, then there exists an ordered basis $\langle\campov_1,\dots,\campov_n\rangle$ of $\mathcal{G}$ such that $[\campov_i,\campov_j]=\sum\limits_{k=1}^{j-1}c_{ij}^k\campov_k$ for $1\leq i<j\leq n$ and where $c_{ij}^k\in\mathbb{R}$. Therefore, $\big\langle\campoA,\campov_1^{(n-1)},\dots,\campov_n^{(n-1)} \big\rangle$ is a solvable structure with respect to~$\{\campoA\}$.

The integrability by quadratures of an $n$th-order ODE which admits a solvable symmetry algebra $\mathcal{G}$ of dimension~$n$ is well known. In fact, the integrability by quadratures can be cha\-rac\-terized through solvable structures:

\begin{Proposition}[\protect{\cite[Proposition~6]{basarab}}] \label{basarab}
An involutive system $\mathcal{A}$ is locally integrable by quadratures if and only if there exists a solvable structure with respect to~$\mathcal{A}$.
\end{Proposition}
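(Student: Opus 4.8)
The statement is an equivalence, so I would prove the two implications separately, placing the constructive content in the ``if'' direction.

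For the ``only if'' implication, assume $\mathcal{A}=\{\campoA_1,\dots,\campoA_r\}$ is locally integrable by quadratures. By definition this yields, on a (possibly smaller) simply connected domain, $n-r$ functionally independent common first integrals $I_{r+1},\dots,I_n$ of the fields in $\mathcal{A}$, obtained through quadratures. I would complete these to a coordinate system $(y^1,\dots,y^r,I_{r+1},\dots,I_n)$ on $D$. Since each $\campoA_i$ annihilates every $I_{r+j}$, the independent fields $\campoA_1,\dots,\campoA_r$ span $\operatorname{span}(\partial_{y^1},\dots,\partial_{y^r})$ pointwise. I would then set $\campoX_k=\partial_{I_{r+k}}$ for $1\le k\le n-r$: these commute among themselves, and a direct computation gives $[\campoX_k,\campoA_i]\in\operatorname{span}(\partial_{y^1},\dots,\partial_{y^r})=\operatorname{span}(\mathcal{A})$. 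Every requirement in Definition~\ref{simetria} is then satisfied, so $\langle\campoA_1,\dots,\campoA_r,\campoX_1,\dots,\campoX_{n-r}\rangle$ is a solvable structure with respect to $\mathcal{A}$.

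For the converse, assume a solvable structure $\langle Y_1,\dots,Y_n\rangle:=\langle\campoA_1,\dots,\campoA_r,\campoX_1,\dots,\campoX_{n-r}\rangle$ is given; as these are $n$ independent fields on $D$ they form a frame, and I take the dual coframe $\{\omega^1,\dots,\omega^n\}$ with $\omega^i(Y_j)=\delta^i_j$. The plan is to extract a triangular structure for the exterior derivatives from the conditions of Definition~\ref{simetria}. Writing $[Y_i,Y_j]=\sum_k c_{ij}^k Y_k$ and using $d\omega^k=-\sum_{i<j}c_{ij}^k\,\omega^i\wedge\omega^j$, I would check that $c_{ij}^k=0$ whenever $j>r$, $i<j$ and $k\ge j$ (because $\campoX_{j-r}$ is a symmetry of $\mathcal{S}_{j-r-1}=\{Y_1,\dots,Y_{j-1}\}$, with $\mathcal{S}_0:=\mathcal{A}$, so $[Y_i,Y_j]\in\operatorname{span}(Y_1,\dots,Y_{j-1})$), and that $c_{ij}^k=0$ for $i<j\le r<k$ by involutivity of $\mathcal{A}$. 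Consequently, for each $k>r$ every nonzero term of $d\omega^k$ carries a factor $\omega^j$ with $j>k$; equivalently $d\omega^k$ lies in the ideal $\mathcal{I}(\omega^{k+1},\dots,\omega^n)$, and in particular $d\omega^n=0$.

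With this filtration I would integrate by quadratures from the top down. Since $D$ is simply connected and $d\omega^n=0$, the Poincar\'e lemma produces $\omega^n=dI_n$ by a single quadrature; as $\omega^n(\campoA_i)=0$, the function $I_n$ is a common first integral of $\mathcal{A}$. Inductively, suppose $I_{k+1},\dots,I_n$ have been obtained with $dI_j\equiv\omega^j$ modulo $\omega^{j+1},\dots,\omega^n$. Then $d\omega^k\in\mathcal{I}(\omega^{k+1},\dots,\omega^n)=\mathcal{I}(dI_{k+1},\dots,dI_n)$, so the pullback of $\omega^k$ to each common level set of $I_{k+1},\dots,I_n$ is closed; applying the Poincar\'e lemma on the leaf, with the values of $I_{k+1},\dots,I_n$ entering as parameters, yields by quadrature a function $I_k$ with $dI_k\equiv\omega^k$ modulo $dI_{k+1},\dots,dI_n$, whence $\campoA_i(I_k)=0$. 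After $n-r$ steps this gives $n-r$ functionally independent first integrals of $\mathcal{A}$, produced entirely by algebraic operations, differentiation, and quadratures, which is exactly local integrability by quadratures. The main obstacle is this inductive step: one must upgrade ``closed modulo the ideal of the already-integrated forms'' to a genuine quadrature, controlling the smooth dependence on the parameters $I_{k+1},\dots,I_n$ and invoking the Poincar\'e lemma on the leaves (shrinking $D$ if needed to keep level sets simply connected). The accompanying bookkeeping of the coefficients $c_{ij}^k$ is mechanical once the triangular indexing is set up.
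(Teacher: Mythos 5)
Your proof is correct and, in substance, the same as the paper's: the paper does not reprove the proposition (it is quoted from \cite[Proposition~6]{basarab}) but immediately recalls the identical mechanism for the sufficiency direction, where your dual coframe $\{\omega^1,\dots,\omega^n\}$ is, in the case $r=1$ that the paper treats, precisely the system of 1-forms~(\ref{formas_b}), your triangular vanishing of the structure functions $c_{ij}^k$ is exactly the stated closure property $d\boldsymbol{\omega}_n=0$ and $d\boldsymbol{\omega}_i\in \mathcal{I}\{\boldsymbol{\omega}_{i+1},\dots,\boldsymbol{\omega}_{n}\}$, and your top-down leafwise Poincar\'e-lemma integration is the paper's successive restriction to the level sets $I_n=c_n$, etc. Your ``only if'' direction (completing the quadrature-built first integrals to coordinates and taking $\campoX_k=\partial_{I_{r+k}}$, then checking Definition~\ref{simetria}) is sound and is the standard argument; the paper delegates that direction entirely to the citation.
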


Now we recall the method \cite{basarab,athorne} to construct $n$ independent f\/irst integrals for equation (\ref{ode}) when a solvable structure $\langle \campoA,\campoX_1, \dots , \campoX_n \rangle$ with respect to $\{\campoA\}$ is known. The solvable structure is used to def\/ine the dif\/ferential 1-forms given by
\begin{gather}\label{formas_b}
\boldsymbol{\omega}_i = \frac{\campoX_n \intprod \cdots \intprod \widehat{\campoX_{i}} \intprod \cdots \intprod \campoX_1 \intprod \campoA \intprod \boldsymbol{\Omega}}{\campoX_n \intprod \cdots \intprod \campoX_1 \intprod \campoA \intprod \boldsymbol{\Omega}}, \qquad i=1,\dots,n,
\end{gather}
where $\widehat{\campoX_i}$ indicates omission of $\campoX_i$, $\intprod$ denotes the interior product, and $\boldsymbol{\Omega} = dx \wedge du\wedge \dots \wedge du_{n-1}$. The system $\{\boldsymbol{\omega}_1,\dots,\boldsymbol{\omega}_n\}$ has distinguishing closure properties \cite{athorne}: $d\boldsymbol{\omega}_n=0$ and for $1\leq i< n$, $d\boldsymbol{\omega}_i\in \mathcal{I}\{\boldsymbol{\omega}_{i+1},\dots,\boldsymbol{\omega}_{n}\}$,
where $\mathcal{I}\{\boldsymbol{\omega}_{i+1},\dots,\boldsymbol{\omega}_{n}\}$ denotes the ideal generated by $\boldsymbol{\omega}_{i+1},\dots,\boldsymbol{\omega}_{n}$ under taking exterior products.

These properties permit the integration by quadratures (at least locally) of the 1-forms (\ref{formas_b}) by proceeding as follows: $\boldsymbol{\omega}_n$ is locally exact and any of its primitives $I_n$ is a f\/irst integral of~$\campoA$. The restriction of $\boldsymbol{\omega}_{n-1}$ to each submanifold def\/ined by $I_n=c_n$, $c_n\in \mathbb{R}$, is closed, and a primitive $I_{n-1}$ can be found by a quadrature. We can continue in this fashion by further restricting the submanifolds at each stage until we have fully integrated the system of 1-forms. By the def\/inition of the 1-forms~(\ref{formas_b}), the functions $\{I_1,\dots,I_n\}$ are functionally independent f\/irst integrals of $\campoA$. These results provide the following theorem~\cite{basarab,athorne}:

\begin{Theorem} \label{formas_exactas}
Let \eqref{campoA} be the vector field associated to equation \eqref{ode} and assume that $\langle \mathbf{A},\mathbf{X}_1,$ $\dots , \mathbf{X}_n \rangle$ is a solvable structure with respect to $\{\mathbf{A}\}$. Then the given ODE \eqref{ode} can be $($at least locally$)$ solved by quadratures alone.
\end{Theorem}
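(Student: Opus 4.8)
The plan is to show that the existence of the solvable structure reduces the integration of the ODE to the successive integration of the 1-forms $\boldsymbol{\omega}_1,\dots,\boldsymbol{\omega}_n$ defined in~\eqref{formas_b}, each of which can be integrated by a single quadrature thanks to their closure properties. Since this is precisely the content of Proposition~\ref{basarab} together with the discussion preceding the statement, the proof is essentially a matter of assembling these pieces into a coherent argument, so I would keep it short and point to the constructive procedure already described.

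First I would invoke Proposition~\ref{basarab}: because $\langle \campoA,\campoX_1,\dots,\campoX_n\rangle$ is a solvable structure with respect to the (trivially involutive) system $\{\campoA\}$, the system $\{\campoA\}$ is locally integrable by quadratures. This already yields the conclusion in principle, but to make the statement useful I would instead exhibit the explicit quadratures. To this end I would recall the 1-forms $\boldsymbol{\omega}_i$ built from the solvable structure via~\eqref{formas_b}, and emphasize their closure properties: $d\boldsymbol{\omega}_n=0$ and $d\boldsymbol{\omega}_i\in\mathcal{I}\{\boldsymbol{\omega}_{i+1},\dots,\boldsymbol{\omega}_n\}$ for $1\leq i<n$. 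These are exactly the properties stated (following~\cite{athorne}) in the paragraph preceding the theorem.

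The heart of the argument is then the descending integration. Since $\boldsymbol{\omega}_n$ is closed and $D$ is simply connected, $\boldsymbol{\omega}_n$ is exact, so by the Poincar\'e lemma a primitive $I_n$ exists and is obtained by a single quadrature; by construction $I_n$ is a first integral of $\campoA$. I would then argue inductively: the closure property $d\boldsymbol{\omega}_{i}\in\mathcal{I}\{\boldsymbol{\omega}_{i+1},\dots,\boldsymbol{\omega}_n\}$ guarantees that the restriction of $\boldsymbol{\omega}_{i}$ to each common level manifold $\{I_{i+1}=c_{i+1},\dots,I_n=c_n\}$ is closed, hence locally exact on that submanifold, so a primitive $I_{i}$ is again recovered by a quadrature. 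Iterating from $i=n$ down to $i=1$ produces $n$ functions $I_1,\dots,I_n$, each computed by quadratures alone.

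Finally I would note that the quotient structure of the 1-forms~\eqref{formas_b} forces the $I_1,\dots,I_n$ to be functionally independent first integrals of $\campoA$, whence the general solution of~\eqref{ode} is obtained implicitly from the relations $I_1=c_1,\dots,I_n=c_n$. I do not expect a genuine obstacle here, since the closure properties are assumed from~\cite{athorne} and simple connectivity of $D$ has been posited at the outset; the only point requiring a little care is making explicit that the restricted form is closed on each leaf, which is precisely where the membership $d\boldsymbol{\omega}_i\in\mathcal{I}\{\boldsymbol{\omega}_{i+1},\dots,\boldsymbol{\omega}_n\}$ is used, so the restriction of $d\boldsymbol{\omega}_i$ annihilates vectors tangent to the leaf.
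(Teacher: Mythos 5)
Your proposal is correct and follows essentially the same route as the paper: the paper's own justification of Theorem~\ref{formas_exactas} is precisely the discussion preceding it, namely the construction of the 1-forms~\eqref{formas_b}, their closure properties $d\boldsymbol{\omega}_n=0$ and $d\boldsymbol{\omega}_i\in\mathcal{I}\{\boldsymbol{\omega}_{i+1},\dots,\boldsymbol{\omega}_n\}$, and the descending integration producing the independent first integrals $I_n,\dots,I_1$ by successive quadratures on the restricted level sets. Your additional appeal to Proposition~\ref{basarab} is consistent with the paper but not needed, since the constructive argument already yields the conclusion.
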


\section[Symmetries and $\mathcal{C}^{\infty}$-symmetries for second-order ODEs]{Symmetries and $\boldsymbol{\mathcal{C}^{\infty}}$-symmetries for second-order ODEs}\label{section2}

In this section we establish some relationships between symmetries and $\mathcal{C}^{\infty}$-symmetries (also called $\lambda$-symmetries) for the integrability by quadratures of second-order ODEs that will be used later.

We consider a second-order equation
\begin{gather}\label{ode2}
w_{2}=\widetilde{\phi}(y,w,w_1),
\end{gather}
where $y$ is the independent variable, $w$ is the dependent variable, and $w_i=\frac{d^i w}{dy^i}$ for $i\in {\mathbb N}$. Let $M_1\subset \mathbb{R}^2$ be an open set of the projection of the domain of $\widetilde{\phi}$ to the corresponding zero-order jet space. Throughout this section~$\campoA$ denotes the vector f\/ield associated to (\ref{ode2}) and $\mathbf{D}_y$ is the total derivative operator with respect to $y$, i.e., $\mathbf{D}_y=\partial_y+w_1\partial_w+\dots+w_k\partial_{w_{k+1}} +\cdots$.

We recall \cite{muriel01ima1} that a $\mathcal{C}^{\infty}$-symmetry of (\ref{ode2}) is a pair $(\campov,\lambda)$, where $
\campov = \xi(y,w) \partial_y + \eta(y,w) \partial_w$ is a vector f\/ield on $M_1$ and $\lambda= \lambda(y,w,w_1)$ is a smooth function, such that
\begin{gather}\label{corchetelambda}
\big[\campov^{[\lambda,(1)]},\campoA\big]=\lambda\campov^{[\lambda,(1)]}-(\campoA+\lambda)(\xi)\campoA,
\end{gather} where $\campov^{[\lambda,(1)]}$ stands for the f\/irst-order $\lambda$-prolongation of~$\campov$
\begin{gather}\label{lambdacoor}
\campov^{[\lambda,(1)]}=\xi\partial_y + \eta \partial_w+\left((\mathbf{D}_y+\lambda)(\eta)-(\mathbf{D}_y+\lambda)(\xi)w_1\right)\partial_{w_1}.
\end{gather}

Two $\mathcal{C}^{\infty}$-symmetries $(\overline{\mathbf{v}}_1,\lambda_1)$ and $(\overline{\mathbf{v}}_2,\lambda_2)$ of equation (\ref{ode2}) are called $\campoA$-equivalent \cite{muriel2009} (or simply equivalent) if the systems $\big\{\campoA,\overline{\mathbf{v}}_1^{[\lambda_1,(1)]}\big\}$ and $\big\{\campoA,\overline{\mathbf{v}}_2^{[\lambda_2,(1)]}\big\}$ are equivalent, i.e.,
\begin{gather*}
\operatorname{span}\bigl(\big\{\campoA,\overline{\mathbf{v}}_1^{[\lambda_1,(1)]}\big\}\bigr)
=\operatorname{span}\bigl(\big\{\campoA,\overline{\mathbf{v}}_2^{[\lambda_2,(1)]}\big\}\bigr).\end{gather*}
We assume that $(\overline{\mathbf{v}}_1,\lambda_1)$ and $(\overline{\mathbf{v}}_2,\lambda_2)$ are two inequivalent $\mathcal{C}^{\infty}$-symmetries of the equation~(\ref{ode2}) and denote $\campoY_i=\overline{\mathbf{v}}_i^{[\lambda_i,(1)]}$ for $i=1,2$. The corresponding expressions~(\ref{corchetelambda}) become
\begin{gather}\label{corcheteAY}
[\campoY_i,\campoA]=\lambda_i\campoY_i+\mu_i\campoA,
\end{gather} where $\mu_i=-(\campoA+\lambda_i)(\campoY_i(y))$ for $i=1,2$. Assuming that $\lambda_1\neq0$ and $\lambda_2\neq0$, (\ref{corcheteAY}) shows that~$\campoY_1$ and~$\campoY_2$ are not symmetries of $\{\campoA\}$ in the sense of Def\/inition \ref{simetria}. Relations (\ref{corcheteAY}) imply that the systems $\{\campoA, \campoY_1\}$ and $\{\campoA, \campoY_2\}$ are in involution; by Frobenius theorem~\cite{olver2000applications}, there exist two functionally independent f\/irst integrals $I_1=I_1(y,w,w_1)$ and $I_2=I_2(y,w,w_1)$ of $\campoA$ such that $\campoY_1(I_1)=\campoY_2(I_2)=0$. Such nonconstant function $I_1$ (resp.~$I_2$) will be called a f\/irst integral of~$\campoA$ associated to the $\mathcal{C}^{\infty}$-symmetry $(\overline{\mathbf{v}}_1,\lambda_1)$ (resp.~$(\overline{\mathbf{v}}_2,\lambda_2)$).

A procedure to calculate by quadratures two f\/irst integrals associated to two inequivalent $\mathcal{C}^{\infty}$-symmetries of a~second-order ODE is described in the following subsection.

\subsection[$\mathcal{C}^{\infty}$-symmetries and integrability by quadratures for second-order ODEs]{$\boldsymbol{\mathcal{C}^{\infty}}$-symmetries and integrability by quadratures for second-order ODEs}\label{subsection1}

 Any given $\mathcal{C}^{\infty}$-symmetry $(\campov,\lambda)$ of equation (\ref{ode2}) is equivalent to the $\mathcal{C}^{\infty}$-symmetry \begin{gather} \label{canonico}
 (\partial_w,\lambda_Q),\qquad \text{where}\quad \lambda_Q=\lambda+ \frac{\campoA(Q)}{Q},
 \end{gather} and $Q$ denotes the characteristic $Q=\eta-\xi\cdot w_1$ of $\campov$. This is a consequence of the relation
 \begin{gather} \label{enum_proop}
 \campov^{[\lambda,(1)]}=Q(\partial_w)^{[\lambda_{Q},(1)]}+\xi\campoA,
 \end{gather} which follows from (\ref{lambdacoor}). This pair $(\partial_w,\lambda_Q)$ is called the \textit{canonical representative} of $(\campov,\lambda)$.

If $(\overline{\mathbf{v}}_1,\lambda_1)$ and $(\overline{\mathbf{v}}_2,\lambda_2)$ are two inequivalent $\mathcal{C}^{\infty}$-symmetries of equation~(\ref{ode2}) and~$Q_1$ and~$Q_2$ are the respective characteristics, then the corresponding expressions (\ref{enum_proop}) can be written as
 \begin{gather}\label{clYX}
 \campoY_1=Q_1\campoX_1+\xi_1\campoA, \qquad \campoY_2=Q_2\campoX_2+\xi_2\campoA,
 \end{gather} where $\campoX_i=(\partial_w)^{[\lambda_{Q_i},(1)]}$ and $\campoY_i=\overline{\mathbf{v}}_i^{[\lambda_i,(1)]}$ for $i=1,2$.

Although, in general, $\campoY_1$ and $\campoY_2$ are not in involution, the vector f\/ields $\campoX_1$ and $\campoX_2$ form a~two-dimensional algebra. In fact, it can be checked that
\begin{gather} \label{lambdascor}
 [\campoX_1, \campoA]= \lambda_{Q_1} \campoX_1, \qquad [\campoX_2, \campoA]= \lambda_{Q_2}\campoX_2, \qquad [\campoX_1,\campoX_2]=\rho(\campoX_1 -\campoX_2),
 \end{gather}
 where
 \begin{gather*}\rho = \frac{\campoX_1(\lambda_{Q_2})- \campoX_2(\lambda_{Q_1})}{\lambda_{Q_1}-\lambda_{Q_2}}.\end{gather*}

By using (\ref{lambdascor}) and the properties of the Lie bracket, it can be proved that if $\overline{h}_1, \overline{h}_2 \in \mathcal{C^{\infty}}(M_1^{(1)})$ satisfy the following systems
 \begin{alignat}{3}
& \campoA(\overline{h}_1) = \lambda_{Q_1} \overline{h}_1,\qquad &&\campoX_2(\overline{h}_1) = \rho \overline{h}_1;&\nonumber\\
& \campoA(\overline{h}_2) = \lambda_{Q_2} \overline{h}_2,\qquad &&\campoX_1(\overline{h}_2) = \rho \overline{h}_2, &\label{hs}
 \end{alignat}
 then $\{\campoA, \overline{h}_1 \campoX_1, \overline{h}_2 \campoX_2 \}$ is an abelian algebra; hence the set $\{\overline{h}_1 \campoX_1, \overline{h}_2 \campoX_2\}$ is a system of commuting symmetries of $\{\campoA\}$. The compatibility of systems~(\ref{hs}) has been proved in~\cite{primer_articulo}.

If  $\overline{h}_1$ and $\overline{h}_2$ are some known particular solutions of  the respective system in~(\ref{hs}) then equation~(\ref{ode2}) can be integrated by quadratures: by using Theorem~\ref{formas_exactas} it can be checked that the dif\/ferential 1-forms
 \begin{gather} \label{omegas0}
 \boldsymbol{\beta}_1={\mu_1}({\campoX_1\intprod\campoA\intprod\boldsymbol{\Omega}}), \qquad \boldsymbol{\beta}_2={\mu_2}({\campoX_2\intprod\campoA\intprod\boldsymbol{\Omega}}),
 \end{gather} where
 \begin{gather}
 \mu_1=\frac{1}{\overline{h}_2\left({ \campoX_2\intprod\campoX_1\intprod\campoA\intprod\boldsymbol{\Omega}}\right)}=\frac{1}{\overline{h}_2 (\lambda_{Q_1} - \lambda_{Q_2})}, \nonumber\\
 \mu_2 = \frac{1}{\overline{h}_1\left({ \campoX_1\intprod\campoX_2\intprod\campoA\intprod\boldsymbol{\Omega}}\right)}=\frac{1}{\overline{h}_1 (\lambda_{Q_2} - \lambda_{Q_1})},\label{mu_1}
\end{gather}
 are (locally) exact. Let $I_i=I_i(y,w,w_1)$ be a function such that $\boldsymbol{\beta}_i=dI_i$ for $i=1,2$. By~(\ref{omegas0}), such functions $I_1$ and $I_2$ are f\/irst integrals of $\campoA$ and satisfy $ \campoX_1(I_1)=\campoX_2(I_2)=0$. According to~(\ref{clYX}), $I_1$ (resp.~$I_2$) is a common f\/irst integral for the involutive system $\{\campoA,\campoY_1\}$ (resp.~$\{\campoA,\campoY_2\}$).

\looseness=-1 Previous discussion shows that two functions $\overline{h}_1$ and $\overline{h}_2$ satisfying the corresponding system in~(\ref{hs}) can be used to construct, by quadratures, two f\/irst integrals, $I_1$ and $I_2$, of $\campoA$, associated to two inequivalent $\mathcal{C}^{\infty}$-symmetries of the equation. We want to point out that $\overline{h}_1$ and $\overline{h}_2$ are determined by the constructed functions $I_1$ and $I_2$, because by~(\ref{omegas0}), (\ref{mu_1}), and (\ref{clYX}), we can write
 \begin{gather} \label{hconX}
 \overline{h}_1=\frac{1}{\boldsymbol{\beta}_2(\campoX_1)}=\frac{1}{\campoX_1(I_2)}=\frac{Q_1}{\campoY_1(I_2)}, \qquad \overline{h}_2=\frac{1}{\boldsymbol{\beta}_1(\campoX_2)}=\frac{1}{\campoX_2(I_1)}=\frac{Q_2}{\campoY_2(I_1)}.
 \end{gather}
In fact, \looseness=-1 let $I_1$, $I_2$ be two arbitrary f\/irst integrals of $\campoA$ associated to $(\overline{\mathbf{v}}_1,\lambda_1)$ and $(\overline{\mathbf{v}}_2,\lambda_2)$, respectively; since $\campoA$, $\campoX_1$, and $\campoX_2$ are independent, then $\campoX_1(I_2)\neq 0$ and $\campoX_2(I_1)\neq 0$ and the functions
 \begin{gather}\label{hconX2}
 \overline{h}_1=\frac{1}{\campoX_1(I_2)}, \qquad \overline{h}_2=\frac{1}{\campoX_2(I_1)}
 \end{gather} are well def\/ined. From~(\ref{lambdascor}) it follows that $\campoA(\campoX_1(I_2))=-[\campoX_1,\campoA](I_2)=-\lambda_{Q_1}\campoX_1(I_2)$, because $\campoA(I_2)=0$.
 Therefore
 \begin{gather} \label{cuenta1}
 \campoA(\overline{h}_1)= \campoA\left(\frac{1}{\campoX_1(I_2)}\right)=
 -\frac{ \campoA\bigl(\campoX_1(I_2)\bigr)}{\bigl(\campoX_1(I_2)\bigr)^2}=
 \frac{\lambda_{Q_1}}{\campoX_1(I_2)}=\lambda_{Q_1}\overline{h}_1.
 \end{gather}
Equality \looseness=-1 $\campoA(\overline{h}_2)=\lambda_{Q_2}\overline{h}_2$ can be proved in a similar way.
 The third relation in (\ref{lambdascor}) provides $\campoX_1(\campoX_2(I_1))=[\campoX_1,\campoX_2](I_1)=-\rho\campoX_2(I_1)$ and $\campoX_2(\campoX_1(I_2))=-[\campoX_1,\campoX_2](I_2)=-\rho\campoX_1(I_2)$, because $\campoX_1(I_1)=\campoX_2(I_2)=0$. By proceeding as in (\ref{cuenta1}), it follows that $\campoX_1(\overline{h}_2)=\rho\overline{h}_2$ and $\campoX_2(\overline{h}_1)=\rho\overline{h}_1$. This proves that functions $\overline{h}_1$ and $\overline{h}_2$ in~(\ref{hconX2}) are solutions of the respective systems in~(\ref{hs}).

These results are collected in the following theorem for further reference.

\begin{Theorem}\label{integrabilidadorden2}
Let $(\overline{\mathbf{v}}_1,\lambda_1)$ and $(\overline{\mathbf{v}}_2,\lambda_2)$ be two inequivalent $\mathcal{C}^{\infty}$-symmetries of equation~\eqref{ode2} and consider their respective canonical representatives, $(\partial_w,\lambda_{Q_1})$ and $(\partial_w,\lambda_{Q_2})$. Denote $\mathbf{X}_i=(\partial_w)^{[\lambda_{Q_i},(1)]}$ and $\mathbf{Y}_i=\overline{\mathbf{v}}_i^{[\lambda_i,(1)]}$ for~$i=1,2$.
\begin{enumerate}\itemsep=0pt
\item[$1.$] If $\overline{h}_1$ and $\overline{h}_2$ are particular solutions of the respective systems in~\eqref{hs}, then two functionally independent first integrals of $\mathbf{A}$ associated to the given $\mathcal{C}^{\infty}$-sy\-mme\-tries can be found by quadratures as primitives of the $1$-forms defined in~\eqref{omegas0}.
\item[$2.$] Conversely, for $i=1,2$, let $I_i$ be a first integral of $\mathbf{A}$ associated to $(\overline{\mathbf{v}}_i,\lambda_i)$. Then the functions $\overline{h}_1$ and $\overline{h}_2$ given by~\eqref{hconX2} satisfy the corresponding systems in~\eqref{hs}.
 \end{enumerate}
\end{Theorem}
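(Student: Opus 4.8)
The plan is to observe that both implications are essentially assembled from the material developed just before the statement, and the proof amounts to organizing that material into the two directions. For Part~1 I would begin with particular solutions $\overline{h}_1,\overline{h}_2$ of the respective systems in \eqref{hs}. As noted immediately after \eqref{hs}, these solutions render $\{\campoA,\overline{h}_1\campoX_1,\overline{h}_2\campoX_2\}$ an abelian system, so $\langle\campoA,\overline{h}_1\campoX_1,\overline{h}_2\campoX_2\rangle$ is a solvable structure with respect to $\{\campoA\}$ in the sense of Definition~\ref{simetria} (commuting vector fields satisfy the involution and symmetry conditions automatically). Theorem~\ref{formas_exactas} then applies. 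I would form the $1$-forms \eqref{formas_b} attached to this solvable structure and check that, after cancelling the scalar factors $\overline{h}_1,\overline{h}_2$ and evaluating the iterated interior products recorded in \eqref{mu_1}, they collapse to the forms $\boldsymbol{\beta}_1,\boldsymbol{\beta}_2$ of \eqref{omegas0}. Their local exactness (from the closure property in Theorem~\ref{formas_exactas} together with simple connectedness of $D$) yields primitives $I_i$ by quadrature. Since each $\boldsymbol{\beta}_i$ already has both $\campoA$ and $\campoX_i$ contracted into the top form, it annihilates them, so $\campoA(I_i)=0$ and $\campoX_i(I_i)=0$; combining $\campoX_i(I_i)=0$ with \eqref{clYX} gives $\campoY_i(I_i)=0$, i.e.\ $I_i$ is associated to $(\overline{\mathbf{v}}_i,\lambda_i)$, and functional independence follows from the nonvanishing of the denominator in \eqref{formas_b}.

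For the converse (Part~2) I would take first integrals $I_i$ of $\campoA$ associated to $(\overline{\mathbf{v}}_i,\lambda_i)$, so that $\campoA(I_i)=0$ and, via \eqref{clYX}, $\campoX_i(I_i)=0$. Independence of $\campoA,\campoX_1,\campoX_2$ forces $\campoX_1(I_2)\neq0$ and $\campoX_2(I_1)\neq0$, so the functions \eqref{hconX2} are well defined. The entire computation rests on one elementary identity: whenever $\campoA(I)=0$ one has $\campoA(\campoW(I))=[\campoA,\campoW](I)$ for any vector field $\campoW$. Applying it with $\campoW=\campoX_1$, $I=I_2$ and the relation $[\campoX_1,\campoA]=\lambda_{Q_1}\campoX_1$ from \eqref{lambdascor} gives $\campoA(\campoX_1(I_2))=-\lambda_{Q_1}\campoX_1(I_2)$, whence $\campoA(\overline{h}_1)=\lambda_{Q_1}\overline{h}_1$ exactly as in \eqref{cuenta1}; the equation $\campoA(\overline{h}_2)=\lambda_{Q_2}\overline{h}_2$ is symmetric. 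For the transverse relations I would use the same identity with the bracket $[\campoX_1,\campoX_2]=\rho(\campoX_1-\campoX_2)$ from \eqref{lambdascor}, discarding the term killed by $\campoX_i(I_i)=0$, to obtain $\campoX_2(\campoX_1(I_2))=-\rho\campoX_1(I_2)$ and hence $\campoX_2(\overline{h}_1)=\rho\overline{h}_1$, and symmetrically $\campoX_1(\overline{h}_2)=\rho\overline{h}_2$. These four equalities are precisely the two systems in \eqref{hs}.

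I expect the only real friction to be bookkeeping rather than ideas. In Part~1 the delicate point is confirming that the abstract $1$-forms \eqref{formas_b} built from $\langle\campoA,\overline{h}_1\campoX_1,\overline{h}_2\campoX_2\rangle$ genuinely reduce to $\boldsymbol{\beta}_1,\boldsymbol{\beta}_2$, which requires evaluating the contraction $\campoX_2\intprod\campoX_1\intprod\campoA\intprod\boldsymbol{\Omega}$ and recognizing it as $\lambda_{Q_1}-\lambda_{Q_2}$, as in \eqref{mu_1}. In Part~2 everything is a mechanical application of the reciprocal rule $\campoA(1/f)=-\campoA(f)/f^2$ together with the commutators \eqref{lambdascor}, so no genuine obstacle remains there.
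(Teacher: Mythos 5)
Your proposal is correct and follows essentially the same route as the paper: Part~1 is the paper's observation that $\{\campoA,\overline{h}_1\campoX_1,\overline{h}_2\campoX_2\}$ is abelian, hence a solvable structure whose associated $1$-forms \eqref{formas_b} reduce (up to sign and the scalar factors $\overline{h}_i$) to $\boldsymbol{\beta}_1,\boldsymbol{\beta}_2$ of \eqref{omegas0}, and Part~2 is exactly the computation \eqref{cuenta1} together with its three companions obtained from the commutators \eqref{lambdascor}. The only difference is that you spell out slightly more of the bookkeeping in Part~1 than the paper does.
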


\section[Solvable structures from $\mathfrak{sl}(2,\mathbb{R})$ for third-order ODEs]{Solvable structures from $\boldsymbol{\mathfrak{sl}(2,\mathbb{R})}$ for third-order ODEs} \label{sec:math}

Let us consider a third-order ODE
\begin{gather}\label{ode3}
u_3 = \phi(x,u,u_1,u_2),
\end{gather}
that admits a Lie symmetry algebra that is isomorphic to $\mathfrak{sl}(2,\mathbb{R})$. In this section we investigate how a solvable structure for~(\ref{ode3}) can be explicitly constructed by using the basis elements of the symmetry algebra. Once this is achieved, the equation can be integrated by quadratures, as in the case of solvable symmetry algebras, although $\mathfrak{sl}(2,\mathbb{R})$ is not solvable.

A basis $\{ \campov_1, \campov_2, \campov_3 \} $ of the Lie symmetry algebra of equation (\ref{ode3}) verifying
\begin{gather}\label{co1}
[\campov_1,\campov_3]=\campov_1,\qquad [\campov_1,\campov_2]=2 \campov_3,\qquad [\campov_3,\campov_2]=\campov_2
\end{gather} can always be chosen \cite{accion}. Most of the approaches to integrate equations of the form (\ref{ode3}) that admit a Lie symmetry algebra isomorphic to $\mathfrak{sl}(2,\mathbb{R})$ \cite{olverclarkson,hydon2000symmetry,ibragimovnucci} use $\campov_1$ or $\campov_2$ to reduce~(\ref{ode3}) because any of them determines a two-dimensional algebra with $\campov_3$. The use of $\campov_3$ seems to be the worst choice to reduce the order of~(\ref{ode3}), because both~$\campov_1$ and~$\campov_2$ are lost as Lie point symmetries for the reduced equation, i.e., they are type I hidden symmetries~\cite{shrauner95,hidden1993}. Nevertheless, these basis elements can be recovered as $\mathcal{C}^{\infty}$-symmetries \cite{algebras_no_resolubles}, which, as it is shown in this section, will play an important role in the construction of the solvable structure.

If we choose the Lie point symmetry $\campov_3$ to reduce the order of equation (\ref{ode3}), then we can introduce canonical coordinates $(y,\alpha)$ for $\campov_3$, i.e., a~local change of variables on an open set $M$ of the variables $(x,u)$ of equation (\ref{ode3}),
\begin{gather*}\varphi(x,u)=(y(x,u),\alpha(x,u)),\end{gather*}
in which $\campov_3$ becomes $\partial_{\alpha}$.
Let $\alpha_1=\frac{d\alpha}{dy}$ be denoted by $w$ and let $w_i=\alpha_{i+1}$ for $1\leq i\leq 2$. Locally, equation (\ref{ode3}) can be written in terms of the invariants $\{y,w,w_1,w_2\}$ of $\campov_3^{(2)}$ as a reduced equation
\begin{gather}\label{ode_redu}
w_{2}=\widetilde{\phi}(y,w,w_1),
\end{gather}
def\/ined for $(y,w) \in M_1$ for some open set $M_1$. In this section $\campoA_{(x,u)}$ will denote the vector f\/ield associated to equation (\ref{ode3}), $\campoA_{(y,\alpha)}$ will be the vector f\/ield associated to equation $\alpha_3=\widetilde{\phi}(y,\alpha_1,\alpha_2)$, and $\campoA_{(y,w)}$ will be the vector f\/ield associated to equation~(\ref{ode_redu}).

The basis elements $\campov_1$ and $\campov_2$ are lost as Lie point symmetries for equation (\ref{ode_redu}), because~$\campov_1^{(1)}$ and~$\campov_2^{(1)}$ are not well-def\/ined vector f\/ields in the $(y,w)$-coordinates (they are exponential vector f\/ields~\cite{olver2000applications}). However, they can be recovered as $\mathcal{C}^{\infty}$-symmetries for equation~(\ref{ode_redu})~\cite{algebras_no_resolubles}. For that purpose we consider two nonzero functions $\varsigma_1,\varsigma_2\in\mathcal{C}^{\infty}(M)$ such that
\begin{gather}\label{varsigmas}
\campov_3(\varsigma_1)=\varsigma_1, \qquad \campov_3(\varsigma_2)=-\varsigma_2.
\end{gather} Observe that $\varsigma_2$ can be constructed from $\varsigma_1$ as $\varsigma_2=1/\varsigma_1$ and vice versa. With this choice we get \begin{gather} \label{conmutav3}
 \big[\campov_3^{(1)},\varsigma_1 \campov_1^{(1)}\big] =\big[\campov_3^{(1)},\varsigma_2 \campov_2^{(1)}\big]= 0,
 \end{gather} which can be checked by using (\ref{co1}) and the properties of the Lie bracket.
 The vector f\/ields~$\varsigma_1 \campov_1^{(1)}$ and~$\varsigma_2 \campov_2^{(1)}$ are projectable~\cite{sardanashvily2013advanced} with respect to the projection
 \begin{gather*}\begin{array}{@{}lrll}
 \pi_{\campov_3}^{(1)}\colon &\varphi^{(1)}\big(M^{(1)}\big) &\rightarrow & M_1,\\
 &(y,\alpha,w) &\mapsto & (y,w),
 \end{array}\end{gather*}
 because (\ref{conmutav3}) holds.
Let \begin{gather}\label{Y1Y2}
 \overline{\campov}_1= \big(\pi_{\campov_3}^{(1)}\big)_{*}\big(\varsigma_1 \campov_1^{(1)}\big), \qquad \overline{\campov}_2= \big(\pi_{\campov_3}^{(1)}\big)_{*}\big(\varsigma_2 \campov_2^{(1)}\big)
 \end{gather} denote the corresponding projected vector f\/ields. By Theorem~3 in~\cite{algebras_no_resolubles} the pairs
 $(\overline{\campov}_1, \lambda_1)$ and $(\overline{\campov}_2, \lambda_2)$
 are $\mathcal{C}^{\infty}$-symmetries of the equation~(\ref{ode_redu}) for the functions
 \begin{gather}\label{lambdas}
 \lambda_1=-\frac{\mathbf{A}_{(y,\alpha)}(\varsigma_1)}{\varsigma_1},\qquad \lambda_2=-\frac{\mathbf{A}_{(y,\alpha)}(\varsigma_2)}{\varsigma_2},
 \end{gather}
 respectively. In what follows we denote
 \begin{gather} \label{is}
 \campoY_1=\overline{\campov}_1^{[\lambda_1,(1)]}, \qquad \campoY_2=\overline{\campov}_2^{[\lambda_2,(1)]}.
 \end{gather}

For $i=1,2$, let $I_i=I_i(x,w,w_1)$ be a nonconstant f\/irst integral of $\campoA_{(y,w)}$ associated to the $\mathcal{C}^{\infty}$-symmetry $(\overline{\campov}_i,\lambda_i)$, i.e., $\campoY_i(I_i)=\campoA_{(y,w)}(I_i)=0$. The existence of such functions is warranted by Frobenius theorem, as it was discussed in Section~\ref{section2}. A moment of ref\/lection reveals that these two f\/irst integrals, written in terms of the original variables $(x,u,u_1,u_2)$, are also f\/irst integrals of the original third-order equation (\ref{ode3}) \cite{adriancedya2015, adrianlibro2016}. In fact,
\begin{gather}\label{AdeI}
\campoA_{(x,u)}(I_1)=\campov_1^{(2)}(I_1)=0,\qquad \campoA_{(x,u)}(I_2)=\campov_2^{(2)}(I_2)=0.
\end{gather} Since $I_1$ and $I_2$ can be written in terms of the dif\/ferential invariants of $\campov_3$ then
\begin{gather}\label{v3deI}
\campov_3^{(2)}(I_1)=\campov_3^{(2)}(I_2)=0.
\end{gather}

Previous discussion provides the following result:
\begin{Theorem}\label{teorema0}
Let $I_1=I_1(y,w,w_1)$ and $I_2=I_2(y,w,w_1)$ be two first integrals of $\mathbf{A}_{(y,w)}$ associated to the $\mathcal{C}^{\infty}$-symmetries $(\overline{\mathbf{v}}_1, \lambda_1)$ and $(\overline{\mathbf{v}}_2, \lambda_2)$ defined by \eqref{Y1Y2} and~\eqref{lambdas}. Then the functions
\begin{gather}\label{int_prim}
I_i=I_i(y(x,u),w(x,u,u_1),w_1(x,u,u_1,u_2)), \qquad i=1,2,
\end{gather}
are two functionally independent first integrals of $\mathbf{A}_{(x,u)}$ such that
\begin{gather}\label{condi1}
\mathbf{v}_1^{(2)}(I_1)=\mathbf{v}_3^{(2)}(I_1)=0,\qquad \mathbf{v}_2^{(2)}(I_2)=\mathbf{v}_3^{(2)}(I_2)=0.
\end{gather}
\end{Theorem}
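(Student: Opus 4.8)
The plan is to verify the stated conditions one family at a time --- invariance under $\campov_3^{(2)}$, invariance under $\campov_i^{(2)}$, the first-integral property with respect to $\campoA_{(x,u)}$, and functional independence --- exploiting throughout that each lifted $I_i$ in~\eqref{int_prim} depends on the original jet variables only through the triple $(y,w,w_1)$. The invariance under $\campov_3^{(2)}$ is immediate and I would dispatch it first: in the canonical coordinates $(y,\alpha)$ one has $\campov_3=\partial_\alpha$, so $\campov_3^{(2)}=\partial_\alpha$ as well, and $y$, $w=\alpha_1$, $w_1=\alpha_2$ form a complete set of differential invariants of $\campov_3^{(2)}$ (none involving $\alpha$). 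Since $I_i$ is a function of $(y,w,w_1)$ alone, $\campov_3^{(2)}(I_i)=0$, which is exactly~\eqref{v3deI}.

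The core of the argument is the pair $\campov_1^{(2)}(I_1)=\campov_2^{(2)}(I_2)=0$. The key fact I would invoke is that relation~\eqref{conmutav3}, which renders $\varsigma_i\campov_i^{(1)}$ projectable onto $\overline{\campov}_i$ under $\pi_{\campov_3}^{(1)}$, persists at the next jet order: $\varsigma_i\campov_i^{(2)}$ is projectable under the second-order projection $\pi_{\campov_3}^{(2)}\colon(y,\alpha,w,w_1)\mapsto(y,w,w_1)$, and its projection is precisely the first-order $\lambda$-prolongation $\campoY_i=\overline{\campov}_i^{[\lambda_i,(1)]}$. This is the exact sense in which the point symmetries lost under reduction reappear as $\mathcal{C}^{\infty}$-symmetries, and it rests on Theorem~3 of~\cite{algebras_no_resolubles} together with the choices~\eqref{varsigmas} and~\eqref{lambdas}. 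Granting it, projectability applied to a function of the invariants gives
\begin{gather*}
\big(\varsigma_i\campov_i^{(2)}\big)(I_i)=\campoY_i(I_i)=0,
\end{gather*}
the last equality being the defining property of $I_i$ as a first integral associated to $(\overline{\campov}_i,\lambda_i)$. As $\varsigma_i$ is nowhere zero, $\campov_i^{(2)}(I_i)=0$, completing~\eqref{condi1}.

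To show that the lifted functions are first integrals of the original equation, I would compare the relevant fields. Writing out~\eqref{campoA} for $\alpha_3=\widetilde{\phi}(y,\alpha_1,\alpha_2)$ and using $w=\alpha_1$, $w_1=\alpha_2$, one obtains $\campoA_{(y,\alpha)}=\campoA_{(y,w)}+w\,\partial_\alpha$; since $I_i$ is independent of $\alpha$ and $\campoA_{(y,w)}(I_i)=0$ by hypothesis, $\campoA_{(y,\alpha)}(I_i)=0$. Because $\campoA_{(x,u)}$ and $\campoA_{(y,\alpha)}$ are $\varphi^{(2)}$-related --- the same third-order ODE read in two coordinate systems --- this yields $\campoA_{(x,u)}(I_i)=0$, recovering~\eqref{AdeI}. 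Functional independence is then inherited from the reduced picture: by the Frobenius argument of Section~\ref{section2} and the inequivalence of the two $\mathcal{C}^{\infty}$-symmetries, $I_1,I_2$ are functionally independent as functions of $(y,w,w_1)$, and since $(x,u,u_1,u_2)\mapsto(y,w,w_1)$ is a submersion onto the space of $\campov_3^{(2)}$-invariants, precomposition preserves the rank two of the Jacobian of $(I_1,I_2)$, so independence survives in the original variables.

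The step I expect to be the genuine obstacle is justifying that $\varsigma_i\campov_i^{(2)}$ projects onto the $\lambda$-prolongation $\campoY_i$: one must check that the $\partial_{w_1}$-component generated by prolonging $\varsigma_i\campov_i^{(1)}$ coincides exactly with the $(\mathbf{D}_y+\lambda_i)$-corrected term appearing in~\eqref{lambdacoor}, with $\lambda_i$ given by~\eqref{lambdas}. This is the one computational verification that cannot be bypassed, and it is what binds the reduction scheme to the $\mathcal{C}^{\infty}$-symmetry formalism of Section~\ref{section2}; everything else in the statement follows formally once this identification is in hand.
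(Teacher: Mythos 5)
Your proposal is correct and follows essentially the same route as the paper, which itself only sketches this result (the theorem is presented as a consequence of the "previous discussion": $\mathbf{v}_3^{(2)}(I_i)=0$ because $I_i$ depends only on the invariants $y,w,w_1$; the relations $\mathbf{A}_{(x,u)}(I_i)=\mathbf{v}_i^{(2)}(I_i)=0$ are asserted via "a moment of reflection" and a citation to the projectability mechanism behind Theorem~3 of the Muriel--Romero reference). You in fact supply more detail than the paper does, correctly isolating the identification of the projection of $\varsigma_i\mathbf{v}_i^{(2)}$ with the $\lambda$-prolongation $\mathbf{Y}_i$ as the one step that carries the real content.
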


Relations (\ref{AdeI}) and (\ref{v3deI}) imply that $\campov_2^{(2)}(I_1)\neq0$ and $\campov_1^{(2)}(I_2)\neq0$, because $\campoA_{(x,u)}$, $\campov_1^{(2)}$, $\campov_2^{(2)}$, and $\campov_3^{(2)}$ are independent vector f\/ields on the four-dimensional space of variables $(x,u,u_1,u_2)$;
hence the functions
\begin{gather}\label{F}
F_1=\frac{1}{\campov_1^{(2)}(I_2)},\qquad F_2=\frac{1}{\campov_2^{(2)}(I_1)}
\end{gather} are well def\/ined. Our aim is to prove that these functions can be used to construct a solvable structure with respect to $\{\campoA_{(x,u)}\}$ by using the basis elements of the Lie symmetry algebra. Before that, we need to establish some properties satisf\/ied by the functions $F_1$ and $F_2$ def\/ined in~(\ref{F}).

\begin{Lemma} \label{sistemas_3}
 The functions $F_1$ and $F_2$ defined by \eqref{F} satisfy
 \begin{alignat}{4}
& \mathbf{v}_3^{(2)}(F_1) = F_1,\qquad && \mathbf{A}_{(x,u)}(F_1) = 0,\qquad && \mathbf{v}_2^{(2)}(F_1) = 0;&\nonumber\\
& \mathbf{v}_3^{(2)}(F_2) = -F_2,\qquad && \mathbf{A}_{(x,u)}(F_2) = 0,\qquad && \mathbf{v}_1^{(2)}(F_2) = 0.& \label{tres}
 \end{alignat}
 Consequently, both systems in \eqref{tres} are compatible.
\end{Lemma}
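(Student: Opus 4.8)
The plan is to reduce everything to commutator bookkeeping, using two structural inputs. First, prolongation is a Lie-algebra homomorphism, so the relations \eqref{co1} lift verbatim to the second prolongations:
\begin{gather*}
\big[\campov_1^{(2)},\campov_3^{(2)}\big]=\campov_1^{(2)},\qquad \big[\campov_1^{(2)},\campov_2^{(2)}\big]=2\campov_3^{(2)},\qquad \big[\campov_3^{(2)},\campov_2^{(2)}\big]=\campov_2^{(2)}.
\end{gather*}
Second, because each $\campov_i$ is a Lie point symmetry of \eqref{ode3}, the symmetry condition recalled in Section~\ref{preliminares} gives $\big[\campov_i^{(2)},\campoA_{(x,u)}\big]\in\operatorname{span}\big(\{\campoA_{(x,u)}\}\big)$ for $i=1,2,3$. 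Together with the annihilation relations from Theorem~\ref{teorema0}, namely $\campoA_{(x,u)}(I_1)=\campov_1^{(2)}(I_1)=\campov_3^{(2)}(I_1)=0$ and $\campoA_{(x,u)}(I_2)=\campov_2^{(2)}(I_2)=\campov_3^{(2)}(I_2)=0$, these will be the only ingredients needed.

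Next I would treat $F_1$. Writing $g=\campov_1^{(2)}(I_2)$, so that $F_1=1/g$, for any vector field $\campoX$ one has $\campoX(F_1)=-F_1^2\,\campoX(g)$; hence it suffices to compute $\campoX(g)$ for $\campoX\in\big\{\campov_3^{(2)},\campoA_{(x,u)},\campov_2^{(2)}\big\}$. The key device is the identity $\campoX\big(\campov_1^{(2)}(I_2)\big)=\campov_1^{(2)}\big(\campoX(I_2)\big)+\big[\campoX,\campov_1^{(2)}\big](I_2)$. In each of the three cases the first term vanishes, because $\campoX(I_2)=0$ for all three choices of $\campoX$. For the second term: taking $\campoX=\campov_3^{(2)}$ gives $\big[\campov_3^{(2)},\campov_1^{(2)}\big](I_2)=-\campov_1^{(2)}(I_2)=-g$, so $\campov_3^{(2)}(g)=-g$ and $\campov_3^{(2)}(F_1)=F_1$; taking $\campoX=\campoA_{(x,u)}$ produces an element of $\operatorname{span}\big(\{\campoA_{(x,u)}\}\big)$ applied to $I_2$, which vanishes since $\campoA_{(x,u)}(I_2)=0$, so $\campoA_{(x,u)}(F_1)=0$; and taking $\campoX=\campov_2^{(2)}$ gives $\big[\campov_2^{(2)},\campov_1^{(2)}\big](I_2)=-2\campov_3^{(2)}(I_2)=0$, so $\campov_2^{(2)}(F_1)=0$. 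This establishes the first row of \eqref{tres}.

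The second row, for $F_2=1/\campov_2^{(2)}(I_1)$, follows by the completely symmetric argument with the roles of the indices $1$ and $2$ interchanged, now invoking $\campoA_{(x,u)}(I_1)=\campov_1^{(2)}(I_1)=\campov_3^{(2)}(I_1)=0$ together with $\big[\campov_3^{(2)},\campov_2^{(2)}\big]=\campov_2^{(2)}$ (which accounts for the sign change $\campov_3^{(2)}(F_2)=-F_2$) and $\big[\campov_1^{(2)},\campov_2^{(2)}\big]=2\campov_3^{(2)}$. Finally, compatibility of each system is automatic: having exhibited $F_1$ and $F_2$ as explicit functions solving their respective overdetermined linear systems, the existence of a solution is exactly what compatibility asserts.

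I do not expect a genuine obstacle here; the only points demanding care are the sign and ordering conventions in the brackets — since \eqref{co1} is written as $[\campov_1,\campov_3]=\campov_1$ etc., one must remember that $\big[\campov_3^{(2)},\campov_1^{(2)}\big]=-\campov_1^{(2)}$ — and the verification that the $\campoA_{(x,u)}$-equations close, which rests precisely on the Lie-symmetry property $\big[\campov_i^{(2)},\campoA_{(x,u)}\big]\in\operatorname{span}\big(\{\campoA_{(x,u)}\}\big)$ combined with $I_1$ and $I_2$ being first integrals of $\campoA_{(x,u)}$.
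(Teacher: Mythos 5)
Your argument is correct and is essentially the paper's own proof: both rest on the lifted commutation relations \eqref{co1}, the Lie-symmetry bracket $\big[\campov_i^{(2)},\campoA_{(x,u)}\big]=\rho_i\campoA_{(x,u)}$, and the annihilation relations \eqref{condi1}, applied through the identity $\campoX\big(\campoY(I)\big)=\campoY\big(\campoX(I)\big)+[\campoX,\campoY](I)$ and the quotient rule for $F_i=1/g$. The signs all check out, and the concluding remark that compatibility follows from exhibiting explicit solutions is exactly how the paper phrases it.
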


\begin{proof}
The equality $\big[\campov_1^{(2)},\campov_3^{(2)}\big](I_2)=\campov_1^{(2)}(I_2)$, which comes from (\ref{co1}), yields
 \begin{gather*}
 \campov_3^{(2)}\bigl(\campov_1^{(2)}(I_2)\bigr)=-\campov_1^{(2)}(I_2),
 \end{gather*} because of (\ref{condi1}). Thus,
 \begin{gather*} \campov_3^{(2)}(F_1) =\campov_3^{(2)}\left(\frac{1}{\campov_1^{(2)}(I_2)}\right)=-\frac{\campov_3^{(2)}\big(\campov_1^{(2)}(I_2)\big)}
 {\bigl(\campov_1^{(2)}(I_2)\bigr)^2}=\frac{\campov_1^{(2)}(I_2)}{\bigl(\campov_1^{(2)}(I_2)\bigr)^2}=F_1.\end{gather*}
The relation $\campov_3^{(2)}(F_2)=-F_2$ can be deduced in a similar way, from $\big[\campov_3^{(2)},\campov_2^{(2)}\big](I_1)=\campov_2^{(2)}(I_1)$. Therefore
 \begin{gather} \label{resultado1}
 \campov_3^{(2)}(F_1)=F_1, \qquad \campov_3^{(2)}(F_2)=-F_2.
 \end{gather}
Since $\campov_i$ is a Lie point symmetry of (\ref{ode3}), then $ \big[\campov_i^{(2)},\campoA_{(x,u)}\big]\! =\! \rho_i \campoA_{(x,u)}$, where $\rho_i \!=\! -\campoA_{(x,u)}(\campov_i(x))$ for $i=1,2$. Therefore
 \begin{gather*}\campoA_{(x,u)}\bigl(\campov_1^{(2)}(I_2)\bigr)=- \big[\campov_1^{(2)},\campoA_{(x,u)}\big](I_2) =-\rho_1 \campoA_{(x,u)}(I_2)=0,\end{gather*} because $\campoA_{(x,u)}(I_2)=0$. Similarly, $\campoA_{(x,u)}\bigl(\campov_2^{(2)}(I_1)\bigr)=0$.
 Consequently,
 \begin{gather} \label{resultado2}
 \campoA_{(x,u)}(F_1) =\campoA_{(x,u)}\left(\frac{1}{\campov_1^{(2)}(I_2)}\right)=0, \qquad
 \campoA_{(x,u)}(F_2) =\campoA_{(x,u)}\left(\frac{1}{\campov_2^{(2)}(I_1)}\right)=0.
 \end{gather}
By using (\ref{co1}) we can write $\big[\campov_1^{(2)},\campov_2^{(2)}\big](I_1)=2\campov_3^{(2)}(I_1)$, which yields $\campov_1^{(2)}\bigl(\campov_2^{(2)}(I_1)\bigr)=0$, because of~(\ref{condi1}). By taking~$I_2$ instead of~$I_1$, the relation $\campov_2^{(2)}\bigl(\campov_1^{(2)}(I_2)\bigr)=0$ also holds. Therefore
 \begin{gather} \label{resultado3}
 \campov_1^{(2)}(F_1) =\campov_2^{(2)}\left(\frac{1}{\campov_1^{(2)}(I_2)}\right)=0, \qquad
 \campov_2^{(2)}(F_2) =\campov_1^{(2)}\left(\frac{1}{\campov_2^{(2)}(I_1)}\right)=0.
 \end{gather}

 Relations (\ref{resultado1}), (\ref{resultado2}) and (\ref{resultado3}) prove that the functions (\ref{F}) satisfy~(\ref{tres}).
\end{proof}

The existence of nontrivial solutions for systems (\ref{tres}) is the key to construct a solvable structure from the basis elements of the Lie symmetry algebra, as it is shown in the following theorem.

\begin{Theorem}\label{teorema_final}\sloppy
Let $F_1$ and $F_2$ be two functions satisfying \eqref{tres}. Then the ordered sets $\big\langle \mathbf{A}_{(x,u)},$ $\mathbf{v}_3^{(2)}, F_1 \mathbf{v}_1^{(2)} , F_2 \mathbf{v}_2^{(2)} \big\rangle$ and $\big\langle\mathbf{A}_{(x,u)}, \mathbf{v}_3^{(2)}, F_2 \mathbf{v}_2^{(2)} ,F_1 \mathbf{v}_1^{(2)} \big\rangle$ are solvable structures with respect to~$\{\mathbf{A}_{(x,u)}\}$.
\end{Theorem}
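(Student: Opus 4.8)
The plan is to verify directly the defining conditions of Definition~\ref{simetria} for each ordered set, taking $\mathcal{A}=\{\mathbf{A}_{(x,u)}\}$ (so $r=1$) on the four-dimensional jet space with coordinates $(x,u,u_1,u_2)$ (so $n=4$ and $n-r=3$). For the first ordered set I write $\mathbf{X}_1=\mathbf{v}_3^{(2)}$, $\mathbf{X}_2=F_1\mathbf{v}_1^{(2)}$, $\mathbf{X}_3=F_2\mathbf{v}_2^{(2)}$ and must check that $\mathbf{X}_1$ is a symmetry of $\{\mathbf{A}_{(x,u)}\}$, that $\mathbf{X}_{j+1}$ is a symmetry of $\mathcal{S}_j=\{\mathbf{A}_{(x,u)},\mathbf{X}_1,\dots,\mathbf{X}_j\}$ for $j=1,2$, and that each $\mathcal{S}_j$ is in involution. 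The independence clause is immediate: $\mathbf{A}_{(x,u)}$, $\mathbf{v}_1^{(2)}$, $\mathbf{v}_2^{(2)}$, $\mathbf{v}_3^{(2)}$ are independent (as recorded just before~(\ref{F})) and $F_1,F_2$ are nonzero, so the scaled frame is independent as well; moreover the involution of $\mathcal{S}_j$ will follow automatically, since the symmetry condition for $\mathbf{X}_j$ places every bracket $[\mathbf{X}_j,\mathbf{Z}]$ with $\mathbf{Z}\in\mathcal{S}_{j-1}$ into $\operatorname{span}(\mathcal{S}_{j-1})\subseteq\operatorname{span}(\mathcal{S}_j)$.

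The computations rest on three ingredients. First, the prolonged commutation relations obtained by applying the (homomorphic) prolongation to~(\ref{co1}): $[\mathbf{v}_1^{(2)},\mathbf{v}_3^{(2)}]=\mathbf{v}_1^{(2)}$, $[\mathbf{v}_2^{(2)},\mathbf{v}_3^{(2)}]=-\mathbf{v}_2^{(2)}$, and $[\mathbf{v}_1^{(2)},\mathbf{v}_2^{(2)}]=2\mathbf{v}_3^{(2)}$. Second, the symmetry relations $[\mathbf{v}_i^{(2)},\mathbf{A}_{(x,u)}]=\rho_i\mathbf{A}_{(x,u)}$, valid because each $\mathbf{v}_i$ is a Lie point symmetry of~(\ref{ode3}). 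Third, the identities~(\ref{tres}) satisfied by $F_1,F_2$. Using the Leibniz rule $[f\mathbf{X},\mathbf{Y}]=f[\mathbf{X},\mathbf{Y}]-\mathbf{Y}(f)\mathbf{X}$, I first note that $\mathbf{X}_1=\mathbf{v}_3^{(2)}$ is a symmetry of $\{\mathbf{A}_{(x,u)}\}$. Then, since $\mathbf{A}_{(x,u)}(F_1)=0$ and $\mathbf{v}_3^{(2)}(F_1)=F_1$, I obtain $[F_1\mathbf{v}_1^{(2)},\mathbf{A}_{(x,u)}]=F_1\rho_1\mathbf{A}_{(x,u)}$ and the cancellation $[F_1\mathbf{v}_1^{(2)},\mathbf{v}_3^{(2)}]=F_1\mathbf{v}_1^{(2)}-\mathbf{v}_3^{(2)}(F_1)\mathbf{v}_1^{(2)}=0$, showing that $\mathbf{X}_2$ is a symmetry of $\mathcal{S}_1$. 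The analogous relations $\mathbf{A}_{(x,u)}(F_2)=0$ and $\mathbf{v}_3^{(2)}(F_2)=-F_2$ give $[F_2\mathbf{v}_2^{(2)},\mathbf{A}_{(x,u)}]=F_2\rho_2\mathbf{A}_{(x,u)}$ and $[F_2\mathbf{v}_2^{(2)},\mathbf{v}_3^{(2)}]=0$.

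The decisive step is the bracket of the two scaled fields. By the Leibniz rule and $[\mathbf{v}_2^{(2)},\mathbf{v}_1^{(2)}]=-2\mathbf{v}_3^{(2)}$,
\[
\big[F_2\mathbf{v}_2^{(2)},F_1\mathbf{v}_1^{(2)}\big]=-2F_1F_2\mathbf{v}_3^{(2)}+F_2\mathbf{v}_2^{(2)}(F_1)\mathbf{v}_1^{(2)}-F_1\mathbf{v}_1^{(2)}(F_2)\mathbf{v}_2^{(2)},
\]
and the point is that the two correction terms vanish precisely because $\mathbf{v}_2^{(2)}(F_1)=0$ and $\mathbf{v}_1^{(2)}(F_2)=0$ from~(\ref{tres}); these are exactly the conditions that keep the bracket inside $\operatorname{span}(\mathcal{S}_2)$ rather than producing spurious $\mathbf{v}_1^{(2)}$ or $\mathbf{v}_2^{(2)}$ components. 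Thus $[F_2\mathbf{v}_2^{(2)},F_1\mathbf{v}_1^{(2)}]=-2F_1F_2\mathbf{v}_3^{(2)}\in\operatorname{span}(\mathcal{S}_2)$, and together with the two brackets above this shows $\mathbf{X}_3$ is a symmetry of $\mathcal{S}_2$; finally $\mathcal{S}_3$ is the full frame and hence trivially involutive. The second ordered set is obtained by interchanging the roles of $F_1\mathbf{v}_1^{(2)}$ and $F_2\mathbf{v}_2^{(2)}$, and every verification transfers verbatim using the same three ingredients; the only bracket that changes is $[F_1\mathbf{v}_1^{(2)},F_2\mathbf{v}_2^{(2)}]=2F_1F_2\mathbf{v}_3^{(2)}$, still in the required span. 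I do not expect a genuine obstacle here: the whole difficulty has been front-loaded into producing functions satisfying~(\ref{tres}) (Lemma~\ref{sistemas_3}), and once the transformation laws of $F_1,F_2$ under $\mathbf{v}_3^{(2)}$ and the annihilation conditions $\mathbf{v}_2^{(2)}(F_1)=\mathbf{v}_1^{(2)}(F_2)=0$ are in hand, every commutator collapses into $\operatorname{span}(\mathcal{S}_j)$ by direct computation.
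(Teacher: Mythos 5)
Your proof is correct and follows essentially the same route as the paper's: establish the commutation relations $[F_i\mathbf{v}_i^{(2)},\mathbf{A}_{(x,u)}]=F_i\rho_i\mathbf{A}_{(x,u)}$, $[\mathbf{v}_3^{(2)},F_i\mathbf{v}_i^{(2)}]=0$, and $[F_1\mathbf{v}_1^{(2)},F_2\mathbf{v}_2^{(2)}]=2F_1F_2\mathbf{v}_3^{(2)}$ via the Leibniz rule, the prolonged relations~(\ref{co1}), and systems~(\ref{tres}), then read off the conditions of Definition~\ref{simetria}. Your additional remarks on independence (requiring $F_1,F_2$ nonvanishing, i.e., nontrivial solutions of~(\ref{tres})) and on involution following from the symmetry conditions only make explicit what the paper leaves implicit.
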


\begin{proof}
 Since for $i=1,2,3$, the vector f\/ield $\campov_i=\xi_i(x,u)\partial_x+\eta_i(x,u)\partial_u$ is a Lie point symmetry of~(\ref{ode3}) then
\begin{gather}\label{corcheteLie}
 \big[\campov_i^{(2)},\campoA_{(x,u)}\big] = \rho_i \campoA_{(x,u)},
\end{gather}
where $\rho_i = -\campoA_{(x,u)}(\xi_i)$. Obviously $\campov_3^{(2)}$ is a symmetry of $\{\campoA_{(x,u)}\}$, in the sense of Def\/inition~\ref{simetria}.
By using (\ref{corcheteLie}) and that $F_1$ and $F_2$ satisfy (\ref{tres}), the following commutation relations can be checked:
\begin{gather}
\big[F_1 \campov_1^{(2)},\campoA_{(x,u)}\big] = F_1 \rho_1 \campoA_{(x,u)},\qquad \big[F_2 \campov_2^{(2)},\campoA_{(x,u)}\big] = F_2 \rho_2 \campoA_{(x,u)},\nonumber\\
 \big[\campov_3^{(2)},F_1 \campov_1^{(2)}\big] = \big[\campov_3^{(2)},F_2 \campov_2^{(2)}\big]=0,\qquad
 \big[F_1 \campov_1^{(2)},F_2 \campov_2^{(2)}\big] = 2 F_1 F_2 \campov_3^{(2)}.\label{cuentas}
\end{gather}

According to Def\/inition \ref{simetria}, these relations prove that:
 \begin{enumerate}\itemsep=0pt
 \item $F_1 \campov_1^{(2)}$ and $F_2 \campov_2^{(2)}$ are symmetries of $\big\{\campoA_{(x,u)},\campov_3^{(2)}\big\}$.
 \item $F_2 \campov_2^{(2)}$ (resp.~$F_1 \campov_1^{(2)}$) is a symmetry of $\big\{\campoA_{(x,u)},\campov_3^{(2)},F_1 \campov_1^{(2)}\big\}$ (resp.~$\big\{\campoA_{(x,u)},\campov_3^{(2)},F_2 \campov_2^{(2)}\big\}$).
 \end{enumerate} The result follows from Def\/inition~\ref{simetria}.
\end{proof}

\section{Strategies for obtaining a complete system of f\/irst integrals}\label{section4}

The previous discussion shows that any pair of particular solutions $F_1$, $F_2$ of the respective system in (\ref{tres}) permits the construction of two solvable structures for a third-order equation with Lie symmetry algebra isomorphic to $\mathfrak{sl}(2,\mathbb{R})$. By Theorem~\ref{teorema_final}, such functions $F_1$ and $F_2$ provide the solvable structures $\big\langle\campoA_{(x,u)},\campov_3^{(2)},F_1\campov_1^{(2)},F_2\campov_2^{(2)}\big\rangle$ and $\big\langle\campoA_{(x,u)},\campov_3^{(2)},F_2\campov_2^{(2)},F_1\campov_1^{(2)}\big\rangle$ with respect to~$\{\campoA_{(x,u)}\}$. Therefore, the integrability by quadratures of the given ODE is warranted by Theorem~\ref{formas_exactas}. In this section we analyze three dif\/ferent strategies that can be followed to integrate completely the given equation.

 \textbf{Method 1:} Once two particular solutions $F_1$ and $F_2$ of (\ref{tres}) have been found, the method based on solvable structures \cite{basarab} (see also \cite{barko,athorne,prince}) can be applied to f\/ind by quadratures three independent f\/irst integrals of $\campoA_{(x,u)}$.
 Denote $\boldsymbol{\Omega} = dx \wedge du \wedge du_1 \wedge du_2$ and consider the corresponding dif\/ferential 1-forms (\ref{formas_b}) associated to the solvable structure $\big\langle\campoA,\campov_3^{(2)},F_1\campov_1^{(2)},F_2\campov_2^{(2)}\big\rangle$:
 \begin{gather}
 \boldsymbol{\omega}_3 = \frac{1}{F_2}\cdot\frac{\campov_1^{(2)} \intprod \campov_3^{(2)} \intprod \campoA_{(x,u)} \intprod \boldsymbol{\Omega}}{ \campov_{2}^{(2)} \intprod \campov_1^{(2)}\intprod \campov_3^{(2)} \intprod \campoA_{(x,u)} \intprod \boldsymbol{\Omega}},\nonumber\\
 \boldsymbol{\omega}_2= \frac{1}{F_1}\cdot\frac{\campov_2^{(2)} \intprod \campov_3^{(2)} \intprod \campoA_{(x,u)} \intprod \boldsymbol{\Omega}}{ \campov_{2}^{(2)} \intprod \campov_1^{(2)}\intprod \campov_3^{(2)} \intprod \campoA_{(x,u)} \intprod \boldsymbol{\Omega}},\nonumber\\
 \boldsymbol{\omega}_1 = \displaystyle\frac{\campov_2^{(2)} \intprod \campov_1^{(2)} \intprod \campoA_{(x,u)} \intprod \boldsymbol{\Omega}}{ \campov_{2}^{(2)} \intprod \campov_1^{(2)}\intprod \campov_3^{(2)} \intprod \campoA_{(x,u)} \intprod \boldsymbol{\Omega}}. \label{omegas}
\end{gather}

The 1-form $\boldsymbol{\omega}_3$ is (locally) exact and a function $\Theta_1$ such that
\begin{gather} \label{theta1}
 d\Theta_1 = \boldsymbol{\omega}_3
 \end{gather} is a common f\/irst integral to the system $\big\{ \campoA_{(x,u)}, \campov_3^{(2)}, F_1\campov_{1}^{(2)}\big\}$.

Since $\big\langle\campoA,\campov_3^{(2)},F_2\campov_2^{(2)},F_1\campov_2^{(2)}\big\rangle$ is also a solvable structure with respect to $\{\campoA_{(x,u)}\}$, the roles of $F_1\campov_1^{(2)}$ and $F_2\campov_2^{(2)}$ can be interchanged and thus $\boldsymbol{\omega}_2$ is also (locally) exact. A function $\Theta_2$ such that
\begin{gather} \label{theta2}
d \Theta_2 = \boldsymbol{\omega}_2
\end{gather}
is a common f\/irst integral to the system $\big\{ \campoA_{(x,u)}, \campov_3^{(2)},F_2\campov_2^{(2)}\big\}$.

Finally, $\boldsymbol{\omega}_1$ is exact modulo $\boldsymbol{\omega}_2$ and $\boldsymbol{\omega}_3$, i.e., $d \boldsymbol{\omega}_1\in \mathcal{I}\{\boldsymbol{\omega}_2,\boldsymbol{\omega}_3\}$, where $\mathcal{I}\{\boldsymbol{\omega}_2,\boldsymbol{\omega}_3\}$ stands for the ideal generated by $\boldsymbol{\omega}_2$ and $\boldsymbol{\omega}_3$ under taking exterior products. A function $\Theta_3$ such that
 \begin{gather} \label{theta3}
 d\Theta_3 = \boldsymbol{\omega}_1 \qquad \text{mod} \ \{ \boldsymbol{\omega}_2, \boldsymbol{\omega}_3\}
 \end{gather}
 completes the set $\{\Theta_1,\Theta_2,\Theta_3\}$ of independent f\/irst integrals of the vector f\/ield $\campoA_{(x,u)}$.

\textbf{Method 2:} We recall that the compatibility of systems (\ref{tres}) has been proved by construc\-ting the particular solutions given in~(\ref{F}):
\begin{gather*}
 F_1=\frac{1}{\campov_1^{(2)}(I_2)}, \qquad F_2=\frac{1}{\campov_2^{(2)}(I_1)}.
\end{gather*} By Theorem~\ref{teorema0} the functions $I_1$ and $I_2$ can be found through (\ref{int_prim}) from two known f\/irst integrals of the reduced equation (\ref{ode_redu}). Although these solutions $F_1$ and $F_2$ could be used to follow Method~1, the construction and integration of (\ref{omegas}) is not necessary: the functions~$I_1$,~$I_2$ given in~(\ref{int_prim}) and $F_1$, $F_2$ are themselves f\/irst integrals of~$\campoA_{(x,u)}$ because of~(\ref{AdeI}) and Lemma~\ref{sistemas_3}, respectively.

\looseness=-1
Let us prove that $\{I_1,F_1,F_2\}$ are functionally independent. If $F_1=\psi(I_1,F_2)$, for some function~$\psi$, then $\campov_1^{(2)}(F_1)=0$, because $\campov_1^{(2)}(F_2)=\campov_1^{(2)}(I_1)=0$. By (\ref{tres}), $\campov_2^{(2)}(F_1)=0$, and hence $[\campov_1^{(2)},\campov_2^{(2)}](F_1)=0$. Therefore (\ref{co1}) implies that $\campov_3^{(2)}(F_1)=0$, which cannot happen by~(\ref{tres}). A~similar reasoning proves that $\{I_2,F_1,F_2\}$ is also a complete set of f\/irst integrals of~$\campoA_{(x,u)}$.

\textbf{Method 3:} By Theorem \ref{teorema0} the function $I_1$ given in (\ref{int_prim}) is a common f\/irst integral to the set $\big\{ \campoA_{(x,u)}, \campov_3^{(2)}, \campov_{1}^{(2)} \big\}$; by (\ref{omegas}) and~(\ref{theta1}), $\Theta_1$ is a common f\/irst integral to the set $\big\{ \campoA_{(x,u)}, \campov_3^{(2)}, F_1\campov_{1}^{(2)} \big\}$. Since both sets of vector f\/ields are equivalent, $\Theta_1$ and $I_1$ are functionally dependent. Similarly, $\Theta_2$ and $I_2$ must be functionally dependent. Therefore, if $\Theta_3$ satisf\/ies (\ref{theta3}), then the set $\{I_1,I_2,\Theta_3\}$ is a complete system of f\/irst integrals of~$\campoA_{(x,u)}$.

In consequence, provided the f\/irst integrals~(\ref{int_prim}) given in Theorem~\ref{teorema0} are known, the complete solution of (\ref{ode3}) arises by f\/inding by quadrature a primitive $\Theta_3$ of $\boldsymbol{\omega}_1$, restricted to $I_1={C_1}$, $I_2={C_2}$, where $C_1,C_2\in \mathbb{R}$. We point out that~$\boldsymbol{\omega}_1$ can be directly computed from the basis elements $\{\campov_1,\campov_2,\campov_3\}$.

The second and the third of the described methods use the functions~(\ref{int_prim}), obtained from two f\/irst integrals of the reduced equation (\ref{ode_redu}), to perform the complete integration of equation~(\ref{ode3}). The procedure described in Section \ref{subsection1} can be applied to integrate by quadratures the reduced equation~(\ref{ode_redu}), because this equation admits the pairs $(\overline{\campov}_1, \lambda_1)$, $(\overline{\campov}_2, \lambda_2)$ def\/ined by~(\ref{Y1Y2}) and~(\ref{lambdas}) as $\mathcal{C}^{\infty}$-symmetries. That procedure works provided that two particular solutions~$\overline{h}_1$ and $\overline{h}_2$ of the corresponding system in~(\ref{hs}) are known (Theorem~\ref{integrabilidadorden2}).

A remarkable fact is that the particular solutions $F_1$, $F_2$ of (\ref{tres}) given in (\ref{F}) can be directly expressed in terms of $\overline{h}_1$ and $\overline{h}_2$, without the need to compute $I_1$ and $I_2$. In fact, functions (\ref{F}) can be written in the form
\begin{gather}\label{Fconhbarra}
 F_1= \frac{1}{\campov_1^{(2)}(I_2)}=\frac{\varsigma_1}{\campoY_1(I_2)}=\frac{\varsigma_1\overline{h}_1}{Q_1},
 \qquad F_2=\frac{1}{\campov_2^{(2)}(I_1)}=\frac{\varsigma_2}{\campoY_2(I_1)}=\frac{\varsigma_2\overline{h}_2}{Q_2},
\end{gather}
 where $\varsigma_1$ and $\varsigma_2$ are given by (\ref{varsigmas}) and $Q_1$, $Q_2$ are the respective characteristics of the vector f\/ields $\overline{\campov}_1$ and $\overline{\campov}_2$ given in~(\ref{Y1Y2}). Expressions (\ref{Fconhbarra}) are easy to check by taking~(\ref{Y1Y2}),~(\ref{v3deI}), and~(\ref{hconX}) into account. All the functions involved in~(\ref{Fconhbarra}) are assumed to be written in terms of the original variables $(x,u,u_1,u_2)$.

The described methods to integrate by quadratures equation (\ref{ode3}) can be signif\/icantly simplif\/ied when two particular solutions $\overline{h}_1$ and $\overline{h}_2$ of the corresponding system in~(\ref{hs}) are known. The next alternatives can be followed:

\textbf{Option 1:} Use the function $\overline{h}_2$ to construct by quadrature a primitive $I_1$ of the 1-form $\boldsymbol{\beta}_1$ def\/ined in (\ref{omegas0}). Once written in variables $(x,u,u_1,u_2)$, the function~$I_1$, and the functions~$F_1$,~$F_2$ given in~(\ref{Fconhbarra}) are three functionally independent f\/irst integrals associated to the original third-order equation. Alternatively, the function~$\overline{h}_1$ can be used to compute by quadrature a f\/irst integral $I_2$ from a primitive of $ \boldsymbol{\beta}_2$ def\/ined in~(\ref{omegas0}) and to construct the complete set of f\/irst integrals $\{I_2,F_1,F_2\}$. For this option, only one quadrature (to compute either~$I_1$ or~$I_2$) is required.

\textbf{Option 2:} Use the functions $\overline{h}_1$ and $\overline{h}_2$ to construct by means of two quadratures both f\/irst integrals $I_1$ and $I_2$ as primitives of~(\ref{omegas0}). Finally, f\/ind by quadrature a~primitive~$\Theta_3$ of~$\boldsymbol{\omega}_1$, restricted to $I_1={C_1}$, $I_2={C_2}$, where $C_1,C_2\in \mathbb{R}$.

\textbf{Option 3:} Use the functions $\overline{h}_1$ and $\overline{h}_2$ to construct the functions $F_1$ and $F_2$ given in~(\ref{Fconhbarra}) and follow the Method~1. For this option, three successive quadratures to f\/ind the primitives $\Theta_1$, $\Theta_2$ and $\Theta_3$ satisfying~(\ref{theta1}), (\ref{theta2}), and (\ref{theta3}), are necessary.

In the following sections this procedure is applied to integrate by quadratures two third-order ODEs admitting Lie symmetry algebras that are isomorphic to $\mathfrak{sl}(2,\mathbb{R})$. The corresponding f\/irst integrals and general solutions can be expressed in terms of two independent solutions of second-order \textit{linear} equations.

\section{Example I}\label{section6}

The third-order ordinary dif\/ferential equation
\begin{gather}\label{ejemplo}
2 u_1 u_3 - 3 u_2^2 - 2 u u_1^4 =0.
\end{gather}
admits the following Lie point symmetries
\begin{gather}\label{sim}
\campov_1 = \partial_x, \qquad \campov_2 = x^2 \partial_x, \qquad \campov_3 = x \partial_x,
\end{gather}
which satisfy relations (\ref{co1}) and generate a Lie symmetry algebra isomorphic to $\mathfrak{sl}(2,\mathbb{R})$. Our aim is to f\/ind solutions $F_1$ and $F_2$ of (\ref{tres}) to construct the solvable structures warranted by Theorem~\ref{teorema_final}. Once this is achieved, equation~(\ref{ejemplo}) can be integrated by quadratures by following any of the strategies described in Section~\ref{section4}.

The direct search of particular solutions $F_1$, $F_2$ of (\ref{tres}) seems not to be an easy task. We use the results given in Section \ref{subsection1} to construct $F_1$, $F_2$ as in (\ref{Fconhbarra}) by using solutions of the corresponding systems~(\ref{hs}).

{\bf Solutions of systems (\ref{hs}).} Systems (\ref{hs}) refer to the second-order equation obtained from (\ref{ejemplo}) by using the Lie point symmetry~$\mathbf{v}_3$. Such reduced equation becomes
\begin{gather} \label{ec_red}
w_2 = \frac{3w_1^2}{2w}+\frac{1}{2} w^3 -yw,
\end{gather} by using the invariants $y=u$ and $w=\frac{1}{x u_1}$ of $\mathbf{v}_3^{(1)}$.

The Lie point symmetries $\campov_1$ and $\campov_2$ can be recovered as $\mathcal{C}^{\infty}$-symmetries of (\ref{ec_red}) by using two functions $\varsigma_1$, $\varsigma_2$ satisfying (\ref{varsigmas}), which can be easily calculated
\begin{gather}\label{proj}
\varsigma_1(x,u)=x, \qquad \varsigma_2(x,u)=\frac{1}{x}.
\end{gather}
According to (\ref{Y1Y2}) and (\ref{lambdas}), these inherited $\mathcal{C}^{\infty}$-symmetries are def\/ined by the pairs $(- w \partial_w, w)$ and $(w \partial_w,w)$, respectively. By using the characteristics
\begin{gather}\label{Qejemplo}
Q_1(y,w,w_1)=-w, \qquad Q_2(y,w,w_1)=w,
\end{gather}
it can be checked that
the f\/irst-order $\lambda$-prolongations of their respective canonical representatives (\ref{canonico}) become
\begin{gather*}\campoX_1 = \partial_w + \left(-w+\frac{w_1}{w} \right) \partial_{w_1}, \qquad \campoX_2 = \partial_w + \left(w+\frac{w_1}{w} \right)\partial_{w_1}.
\end{gather*}
Systems (\ref{hs}) can be constructed with these vector f\/ields and the vector f\/ield $\campoA_{(y,w)}$ associated to equation~(\ref{ec_red}).
Two of their particular solutions ${\overline{h}_1}={\overline{h}_1}(y,w,w_1) $ and ${\overline{h}_2}={\overline{h}_2}(y,w,w_1)$ are given by
\begin{gather}\label{hbarejemplo}
{\overline{h}_1} = -\frac{\left(({w^2-w_1}) \Psi_2(y) - {2w} \Psi_2'(y)\right)^2 }{{4w^2} W(\Psi_1,\Psi_2)(y)},\qquad
{\overline{h}_2} =\frac{\left(({w^2+w_1}) \Psi_2(y) + {2w} \Psi_2'(y) \right)^2} {{4w^2} W(\Psi_1,\Psi_2)(y)},
\end{gather}
where $\Psi_1=\Psi_1(y)$ and $\Psi_2=\Psi_2(y)$ are two independent solutions of the Airy equation
\begin{gather}\label{airy}
\Psi_{yy} = \frac{1}{2} y \Psi,
\end{gather} and $W(\Psi_1,\Psi_2)(y)=\Psi_1(y) \Psi_2'(y) - \Psi_1'(y) \Psi_2(y)$ denotes the corresponding Wronskian.

{\bf Solutions of systems (\ref{tres}) and solvable structures for equation (\ref{ejemplo}).} Once the functions (\ref{hbarejemplo}) have been determined, the two particular solutions (\ref{Fconhbarra}) for systems (\ref{tres}) can be determined without any additional integration. Expressions (\ref{Fconhbarra}) use the functions (\ref{proj}), (\ref{Qejemplo}), and (\ref{hbarejemplo}), written in the original variables $(x,u,u_1,u_2)$:
\begin{gather}
F_1(x,u,u_1,u_2)=\frac{\left({ \left({xu_2+2u_1} \right)\Psi_1(u) - {2x u_1^2} \Psi_1'(u) }\right)^2}{4 u_1^3 W(\Psi_1,\Psi_2)(u)} , \nonumber\\
 F_2(x,u,u_1,u_2)=\frac{\left({u_2\Psi_1(u)-2u_1^2\Psi_1'(u)}\right)^2}{4u_1^3 W(\Psi_1,\Psi_2)(u)}.\label{Feje}
\end{gather}
By Theorem \ref{teorema_final}, functions (\ref{Feje}) permit the construction of the solvable structures $\big\langle \campoA_{(x,u)}$, $\campov_3^{(2)}, F_1 \campov_1^{(2)}, F_2 \campov_2^{(2)} \big\rangle$ and $\big\langle \campoA_{(x,u)}, \campov_3^{(2)}, F_1 \campov_1^{(2)}, F_2 \campov_2^{(2)} \big\rangle$ with respect to the vector f\/ield $\campoA_{(x,u)}$ associated to equation (\ref{ejemplo}).

{\bf Complete sets of f\/irst integrals of equation (\ref{ejemplo}) obtained by quadratures and general solution.} Functions (\ref{hbarejemplo}) are all what we need to complete the solution by quadratures by following any of the alternatives enumerated in Section~\ref{section4}.

{\bf Option 1:} Once the functions (\ref{hbarejemplo}) have been determined, two independent f\/irst integrals for equation (\ref{ec_red}) can be calculated by quadratures as primitives of the 1-forms (\ref{omegas0}). Such functions written in variables $(x,u,u_1,u_2)$ provide two functionally independent f\/irst integrals of equation~(\ref{ejemplo}), which can be expressed in terms of the independent solutions $\Psi_1$ and $\Psi_2$ of the Airy equation~(\ref{airy}):
\begin{gather}
I_1(x,u,u_1,u_2) = \frac{u_2\Psi_1(u)-2u_1^2\Psi_1'(u)}{u_2\Psi_2(u)-2u_1^2\Psi_2'(u)}\nonumber,\\
I_2(x,u,u_1,u_2) = \frac{(xu_2+2u_1)\Psi_1(u) - {2x u_1^2} \Psi_1'(u) }{( xu_2+2u_1)\Psi_2(u) - {2x u_1^2} \Psi_2'(u)}.\label{i1}
\end{gather}

Any of the sets $\{I_1,F_1,F_2\}$ or $\{I_2,F_1,F_2\}$ is a complete system of functionally independent f\/irst integrals for the equation~(\ref{ejemplo}).

{\bf Option 2:} In this case the corresponding 1-form $\boldsymbol{\omega}_1$ in (\ref{omegas}) becomes
\begin{gather*}\boldsymbol{\omega}_1 = \frac{ 2 u_1 u_2 + x u_2^2 - 2 x u u_1^4}{2 u_1^3} du - \frac{u_1+2 u_2 x}{u_1^2} du_1 + \frac{x}{u_1} du_2.\end{gather*}
The restriction of $\boldsymbol{\omega}_1$ to the submanifold def\/ined by $I_1 = c_1$ and $I_2 = c_2$, ($c_1,c_2\in \mathbb{R}$), is exact and
\begin{gather}\label{theta3ejemplo}
\Theta_3(x,u,u_1,u_2) = \ln(x) + \ln\left(\frac{\Psi_1(u)-I_1{\Psi_2}(u)}{\Psi_1(u)-I_2{\Psi_2}(u)}\right)
\end{gather} is a primitive. As it was discussed in Section \ref{section4}, the set $\{I_1,I_2,\Theta_3\}$ def\/ined by the functions~(\ref{i1}) and~(\ref{theta3ejemplo}) is a complete system of f\/irst integrals for equation~(\ref{ejemplo}).

\textbf{Option 3:} The functions $F_1$ and $F_2$ given in (\ref{Feje}) can be used to construct the 1-forms~(\ref{omegas}), which can be successively integrated by quadratures. We omit the expressions for the f\/irst integrals $\{\Theta_1,\Theta_2\}$ corresponding to~(\ref{theta1}) and~(\ref{theta2}), because they are functionally dependent of the functions~$I_1$,~$I_2$ given in~(\ref{i1}). The computation of a remaining f\/irst integral $\Theta_3$ could be achieved as in Option~2.

The general solution of equation (\ref{ejemplo}) can be obtained, for instance, by setting $I_1=c_1$, $I_2=c_2$, $\Theta_3=\ln(c_3)$ and becomes
\begin{gather}\label{solucion1}
x= c_3 \frac{\Psi_1(u)-c_2{\Psi_2}(u)}{\Psi_1(u)-c_1{\Psi_2}(u)},
\end{gather} where $c_i\in\mathbb{R}$ for $i=1,2,3$, $c_1\neq c_2$, and $\Psi_1$ and $\Psi_2$ are two independent solutions of the Airy equation~(\ref{airy}).

\begin{Remark} Equation (\ref{ejemplo}) has been chosen for purposes of illustration of the procedures presented in this paper; it could have been solved by using other methods that appear in the literature.

{\bf 1.} By following \cite{ibragimovnucci}, equation (\ref{ejemplo}) can be reduced to a f\/irst-order ODE by using the dif\/ferential invariants
\begin{gather*}%\label{invariantesorden2}
y=u,\qquad m=\frac{u_2}{u_1^2}
\end{gather*}
of the two-dimensional subalgebra $\mathcal{L}_2$ generated by $\mathbf{v}_1$ and $\mathbf{v}_3$. The reduced f\/irst-order equation is the Riccati equation
\begin{gather}\label{ricatinuci}
m_1=-\frac{1}{2}m^2+y,
\end{gather}
which becomes the Airy equation (\ref{airy}) by means of the standard transformation $m(y)=\frac{\Psi'(y)}{2\Psi(y)}$.
 In consequence, the general solution of~(\ref{ricatinuci}) can be expressed in terms of two independent solutions, $\Psi_1=\Psi_1(y)$ and $\Psi_2=\Psi_2(y)$, of equation~(\ref{airy}).
Such general solution, written in terms of the original variables $(x,u,u_1,u_2)$, yields the second-order equation
\begin{gather}\label{segundoorden}
\frac{u_2}{u_1^2}=\frac{C\Psi_1'(u)+\Psi_2'(u)}{2(C\Psi_1(u)+\Psi_2(u))},
\end{gather} where $C \in \mathbb{R}$. Although~(\ref{segundoorden}) admits $\mathcal{L}_2$, and may therefore be integrated by quadratures, the expression obtained for its general solution
\begin{gather*}%\label{solgen}
x=C_1+C_2\int \frac{du}{(C\Psi_1(u)+\Psi_2(u))^2},
\end{gather*} requires a primitive which apparently cannot be easily evaluated. The procedure presented in this paper overcomes this dif\/f\/iculty, because provides expression (\ref{solucion1}) for the general solution of~(\ref{ejemplo}). This general solution could be also reached by following the procedure described in~\cite{olverclarkson}, see also~\cite{hydon2000symmetry}.

{\bf 2.} Apart from (\ref{sim}), equation (\ref{ejemplo}) admits three additional Lie point symmetries, whose inf\/initesimals can be expressed in terms of solutions of the Airy equation~(\ref{airy}) as follows
\begin{gather}\label{simairy}
\Psi_1(u)^2\partial_u,\qquad \Psi_2(u)^2\partial_u,\qquad \Psi_1(u)\Psi_2(u)\partial_u.
\end{gather} In (\ref{simairy}) $\Psi_1=\Psi_1(u)$ and $\Psi_2=\Psi_2(u)$ denote two linearly independent solutions of the equation $\Psi''(u) = \frac{1}{2} u \Psi(u)$.

Indeed, it can be checked that equation~(\ref{ejemplo}) satisf\/ies the conditions that appear in \cite[p.~235]{schwarz2007algorithmic}, although the corresponding solution algorithm could present some dif\/f\/iculties because a rational solution of equation (\ref{ricatinuci}) seems to be required.

Alternatively, equation (\ref{ejemplo}) can be transformed by a point transformation into the representative third-order equation with six-dimensional stabilizer (see \cite[Section~IV, Case~B.4]{gatomri}). The transformation can be found by using basis elements of the form (\ref{simairy}) such that $\Psi_1=\Psi_1(u)$ and $\Psi_2=\Psi_2(u)$ verify $W(\Psi_1,\Psi_2)(u)=1$. It can be checked that by introducing the new dependent variable $z=\frac{\Psi_2(u)}{\Psi_1(u)}$ equation~(\ref{ejemplo}) becomes
\begin{gather*}%\label{omrieq}
z_3=\frac{3z_2^2}{2z_1}.
\end{gather*} This equation can be easily integrated by quadratures and its general solution $z(x)=\frac{k_1}{k_2+x}+k_3$ provides an alternative expression for the general solution of equation~(\ref{ejemplo}) obtained in~(\ref{solucion1}).
\end{Remark}

\section{Example II}\label{section7}

For the third-order equation
\begin{gather}\label{ejemplo2}
u_3 \big(u_1^2 - 2 u u_2\big) u^2 +1= 0,
\end{gather}
 the determining equations for the inf\/initesimals $\xi=\xi(x,u)$ and $\eta=\eta(x,u)$ of a Lie point symmetry reduce to
 \begin{gather}\label{ecdet}
 \eta=u\xi_u,\qquad \xi_{xxx}=0, \qquad \xi_u=0.
 \end{gather}
 If follows that the Lie invariance algebra of equation (\ref{ejemplo2}) is three-dimensional and it is generated by
\begin{gather}\label{generadores}
\campov_1 = \partial_x, \qquad \campov_2 = x^2 \partial_x+2 x u \partial_u, \qquad \campov_3=x \partial_x + u \partial_u.
\end{gather}
By (\ref{ecdet}) any Lie point symmetry of (\ref{ejemplo2}) must be a linear combination of~(\ref{generadores}). Vector f\/ields~(\ref{generadores}) satisfy the commutation relations~(\ref{co1}) and generate the nonsolvable Lie algebra $\mathfrak{sl}(2,\mathbb{R})$.

If we follow, for instance, the Option 2 in Section \ref{section4} to integrate equation (\ref{ejemplo2}), the procedure provides three independent f\/irst integrals that can be expressed in terms of two independent solutions, $\psi_1=\psi_1(z)$, $\psi_2=\psi_2(z)$, of the linear equation
 \begin{gather}\label{edo_lineal}
 z \psi''(z)-\psi'(z)-4 z^4 \psi(z)=0.
 \end{gather}

The following functions correspond to the f\/irst integrals (\ref{int_prim}) given in Theorem~\ref{teorema0}:
\begin{gather}
 I_1(x,u,u_1,u_2) = \frac{2 (-2 u+u_1x)(u_1^2 - 2 u u_2)\psi_1(u_1^2 - 2 u u_2)+x \psi_1'(u_1^2 - 2 u u_2)}{2 (-2 u+u_1x)(u_1^2 - 2 u u_2)\psi_2(u_1^2 - 2 u u_2)+x \psi_2'(u_1^2 - 2 u u_2)}, \nonumber\\
 I_2(x,u,u_1,u_2)=\frac{2 u_1 (u_1^2 - 2 u u_2) \psi_1(u_1^2 - 2 u u_2)+\psi_1'(u_1^2 - 2 u u_2) }{2 u_1 (u_1^2 - 2 u u_2) \psi_2(u_1^2-2uu_2)+\psi_2'(u_1^2 - 2 u u_2)}.\label{int_prim_ej_2}
\end{gather}

It can be checked that a primitive of the corresponding 1-form $\boldsymbol{\omega_1}$ def\/ined in (\ref{omegas}), restricted to the submanifold def\/ined by $I_1=C_1$ and $I_2=C_2$, is given by
 \begin{gather*}\Theta_3(x,u,u_1,u_2)= \ln \left(\ \frac{C_1 \psi_2(u_1^2 - 2 u u_2)-\psi_1(u_1^2 - 2 u u_2)}{x\bigl(C_2 \psi_2(u_1^2 - 2 u u_2)-\psi_1(u_1^2 - 2 u u_2)\bigr)} \right).\end{gather*}
After replacing $C_1$ and $C_2$ by the functions $I_1$ and $I_2$ given in (\ref{int_prim_ej_2}), the function $I_3=\exp(\Theta_3)$ becomes
 \begin{gather} \label{int_3_ej_2}
 I_3(x,u,u_1,u_2)= \frac{2 u_1 (u_1^2 - 2 u u_2)\psi_2(u_1^2 - 2 u u_2)+\psi_2'(u_1^2 - 2 u u_2)}{2(u_1^2 - 2 u u_2)(u_1x-2u) \psi_2(u_1^2 - 2 u u_2)+x\psi_2'(u_1^2 - 2 u u_2)}.
 \end{gather}
The functions given in (\ref{int_prim_ej_2}) and (\ref{int_3_ej_2}) are three independent f\/irst integrals for equation~(\ref{ejemplo2}); they provide its implicit solution \begin{gather}
I_1(x,u,u_1,u_2)=C_1,\qquad I_2(x,u,u_1,u_2)=C_2, \qquad I_3(x,u,u_1,u_2)=C_3,\label{impsol}
\end{gather}
where $C_i\in \mathbb{R}$  for $i=1,2,3$. The elimination of $u_1$ and $u_2$ from~(\ref{impsol}) to obtain an explicit solution of (\ref{ode3}) seems practically impossible: the functions $\psi_1$ and $\psi_2$ and their derivatives in~(\ref{impsol}) are evaluated on $u_1^2 - 2 u u_2$. Our aim is to obtain a parametric solution for equa\-tion~(\ref{ejemplo2}). For that purpose, we consider any solution $\psi=\psi(z)$ of~(\ref{edo_lineal}) and introduce a parameter $t>0$ such that $z(t)= \sqrt{2 t}$. With this choice, the function $\phi(t)=\psi(z(t))$ is a solution of the Schr\"odinger equation
 \begin{gather} \label{schro1}
 \phi''(t) - 4 \sqrt{2t} \phi(t) =0.
 \end{gather}
Conversely, if $\phi=\phi(t)$ is a solution of (\ref{schro1}), then $\psi(z)=\phi\big(\frac{z^2}{2}\big)$ satisf\/ies $\psi'(z)=z\phi'\big(\frac{z^2}{2}\big)$ and $\psi=\psi(z)$ is a solution of (\ref{edo_lineal}). The implicit solution~(\ref{impsol}) expressed in terms of the solutions~$\phi_1$ and~$\phi_2$ of~(\ref{schro1}) associated to~$\psi_1$ and~$\psi_2$, respectively, becomes
 \begin{gather}
 \frac{2(-2 u + u_1 x) \phi_1(s)+ x \phi_1'(s)}{2(-2 u+ u_1 x) \phi_2(s)+x \phi_2'(s)} = C_1,\qquad
 \frac{2u_1 \phi_1(s)+\phi_1'(s)}{2u_1 \phi_2(s)+ \phi_2'(s)} = C_2,\nonumber\\
 \frac{2 u_1 \phi_2(s)+ \phi_2'(s)}{2 (u_1 x - 2 u) \phi_2(s)+x \phi_2'(s)} = C_3,\label{im22}
 \end{gather}
 where
% \begin{gather*} %\label{s}
$s= \frac{(u_1^2 - 2 u u_2)^2}{2}$.
% \end{gather*}
 From (\ref{im22}), we obtain the parametric solution for equation~(\ref{ejemplo2})
 \begin{gather} \label{gen2}
 x(s) = \frac{\phi_1(s) - C_1 \phi_2(s)}{C_3 (\phi_1(s)-C_2 \phi_2(s))}, \qquad
 u(s) = \frac{(C_2-C_1) W(\phi_1,\phi_2)(s)}{4 C_3 (\phi_1(s)-C_2 \phi_2(s))^2},
 \end{gather} where $\phi_1$ and $\phi_2$ are two independent solutions of the Schr\"odinger equation~(\ref{schro1}) and $W(\phi_1,\phi_2)$ stands for the corresponding Wronskian.

\begin{Remark}
The presence of a Schr\"odinger equation in the solution of a third-order equation with $\mathfrak{sl}(2,\mathbb{R})$ is not new in the literature: it appears in \cite{olverclarkson} (see also~\cite{hydon2000symmetry}) by using the relations by prolongation of the three possible realizations of $\mathfrak{sl}(2,\mathbb{C})$ on the $(x,u)$ plane. Such relations could have been also used to complete the solution of equation (\ref{ejemplo2}). It should be remarked that in our method no previous transformation is needed to map the basis elements into one of the canonical realizations of $\mathfrak{sl}(2,\mathbb{C})$.

Reductions of equation (\ref{ejemplo2}) to Riccati equations or to the associated second-order linear equations can be also obtained by the original Lie theory or by following the procedure in~\cite{ibragimovnucci}. For instance, it can be checked that equation (\ref{ejemplo2}) reduces to the Riccati equation
 \begin{gather} \label{ricatinuci2}
 m_1=2y^2m^2-y,
 \end{gather} where $m=\frac{1}{u_1}$ and $y=\frac{1}{2}u_1^2-u u_2$. Equation~(\ref{ricatinuci2})
 becomes the linear equation $y\varphi''(y)-2\varphi'(y)-2 y^4 \varphi(y)=0$
 by means of the standard transformation $m(y)=-\frac{\varphi'(y)}{2 y^2 \varphi(y)}$. By using two independent solutions $\varphi_1=\varphi_1(y)$ and $\varphi_2=\varphi_2(y)$ of that linear equation, the general solution of~(\ref{ricatinuci2}), written in terms of the original variables $(x,u,u_1,u_2)$, yields the second-order equation
 \begin{gather} \label{segundoorden2}
 u_1=-2\left(\frac{1}{2}u_1^2-u u_2\right)^2 \frac{C\varphi_2\left(\frac{1}{2}u_1^2-u u_2\right)-\varphi_1\left(\frac{1}{2}u_1^2-u u_2\right)}{C\varphi_2'\left(\frac{1}{2}u_1^2-u u_2\right)-\varphi_1'\left(\frac{1}{2}u_1^2-u u_2\right)}.
 \end{gather}
Equation (\ref{segundoorden2}) needs to be integrated in order to recover the general solution of (\ref{ejemplo2}). This can be avoided by following the method presented in this paper, which provides the parametric solution~(\ref{gen2}) in terms of a slight dif\/ferent second-order linear ODE (equation~(\ref{schro1})).
\end{Remark}

 \section{Concluding remarks and further extensions}

\looseness=-1 The well-known method to integrate by quadratures an ODE with a solvable symmetry algebra is not available for third-order ODEs admitting a Lie symmetry algebra that is isomorphic to the nonsolvable symmetry algebra $\mathfrak{sl}(2,\mathbb{R})$. Basis elements $\{\campov_1,\campov_2,\campov_3\}$ of the Lie symmetry algebra satisfying the relations (\ref{co1}) are used to construct explicitly solvable structures with respect to the vector f\/ield $\campoA$ associated to the ODE. In consequence, the given ODE can be integrated by quadratures.

Such solvable structures are of the form $\big\langle \campoA,\campov_3^{(2)},F_1\campov_1^{(2)},F_2\campov_2^{(2)}\big\rangle$ and $\big\langle \campoA,\campov_3^{(2)},F_2\campov_2^{(2)},F_1\campov_1^{(2)}\big\rangle $, where $F_1$ and $F_2$ are solutions of systems~(\ref{tres}). The existence of such solutions can be proved by a constructive procedure, which involves a second-order ODE obtained by reducing the order of the original ODE with~$\campov_3$. Such reduced equation admits two $\mathcal{C}^{\infty}$-symmetries inherited from~$\campov_1$ and~$\campov_2$. Two known f\/irst integrals of the reduced equation associated to the $\mathcal{C}^{\infty}$-symmetries can be used to construct the functions $F_1$ and $F_2$ (see~(\ref{F})). It is noteworthy that such f\/irst integrals can be determined by quadratures when two particular solutions $\overline{h}_1$, $\overline{h}_2$ of systems~(\ref{hs}) are known (Theorem~\ref{integrabilidadorden2}). In fact, these solutions $\overline{h}_1$, $\overline{h}_2$ lead directly to the construction of the functions~$F_1$ and~$F_2$ given in (\ref{Fconhbarra}), because the functions $\varsigma_1$, $\varsigma_2$, $Q_1$, and $Q_2$ are known from the basis elements of the Lie symmetry algebra.
Once these functions $F_1$ and $F_2$ are known, the given ODE can be integrated by quadratures (as in the case of solvable symmetry algebras) by using any of the three dif\/ferent strategies described in Section \ref{section4}.

In the presented procedure it is not necessary to map the basis elements of the Lie symmetry algebra into one of the four inequivalent canonical realizations of $\mathfrak{sl}(2,\mathbb{R})$ on the plane. For each realization of $\mathfrak{sl}(2,\mathbb{R})$ the corresponding class of third-order $\mathfrak{sl}(2,\mathbb{R})$-invariant ODEs involves arbitrary elements/functions of single arguments. The method has been illustrated with two particular examples that correspond to the f\/irst and second realizations of $\mathfrak{sl}(2,\mathbb{R})$, respectively. In both cases the general solution can be expressed in terms of two linearly independent solutions of some second-order linear equations, as in previous methods in the literature \cite{olverclarkson,ibragimovnucci} (see also~\cite{hydon2000symmetry}). Further applications of the method to arbitrary third-order $\mathfrak{sl}(2,\mathbb{R})$-invariant equations, including the study of the two remaining inequivalent realizations of~$\mathfrak{sl}(2,\mathbb{R})$ on the plane not considered in this paper, are currently being investigated. In particular, it remains the question whether the functions $F_1$ and $F_2$ can be always determined in terms of solutions of some second-order linear ODEs, as occurs in the particular examples presented in this paper. These extensions will be addressed in a separate work.

It is expected that the methods developed in this work can be adapted to integrate by quadratures equations with other nonsolvable symmetry algebras. A work in this line is also currently in progress.

\subsection*{Acknowledgements}

The constructive comments and the ef\/forts of the editor and referees to improve the contents of this paper are gratefully acknowledged. The authors also thank Professor J.L.~Romero for his assistance, patience and always valuable suggestions. This research was partially supported by the University of C\'adiz and Junta de Andaluc\'ia research group FQM~377. A.~Ruiz acknowledges the support of a grant of the University of C\'adiz program ``Movilidad Internacional, Becas UCA-Internacional-Posgrado'' during his stay at the University of Minnesota.

\pdfbookmark[1]{References}{ref}
\LastPageEnding

\end{document}